\newcommand{\bx}{\ensuremath{\boldsymbol{x}}}
\newcommand{\fm}{\ensuremath{\mathfrak m}}
\newcommand{\fp}{\ensuremath{\mathfrak p}}
\newcommand{\fq}{\ensuremath{\mathfrak q}}
\newcommand{\ft}{\ensuremath{\mathfrak t}}
\newcommand{\ov}{\overline}
\newcommand{\ges}{{\geqslant}}
\newcommand{\les}{{\leqslant}}
\newcommand{\col}{{\colon}}
\newcommand{\BZ}{\ensuremath{\mathbb Z}}
\DeclareMathOperator{\HH}{H}
\DeclareMathOperator{\Ker}{Ker}
\DeclareMathOperator{\rank}{rank}
\DeclareMathOperator{\depth}{depth}
\DeclareMathOperator{\height}{height}
\DeclareMathOperator{\edim}{edim}
\DeclareMathOperator{\shift}{\mathsf{\Sigma}}
\newcommand{\ext}[4]{\operatorname{Ext}^{#1}_{#2}(#3,#4)}
\newcommand{\Tor}[4]{\operatorname{Tor}_{#1}^{#2}(#3,#4)}
\newcommand{\Hom}[3]{\operatorname{Hom}_{#1}(#2,#3)}
\newcommand{\Po}[2]{\ensuremath{P^{#1}_{#2}(t)}}
\newcommand{\ba}[1]{\ensuremath{I_{#1}^{#1}(t)}}
\newcommand{\baa}[2]{\ensuremath{I_{#1}^{#2}}}
\newcommand{\dd}{\ensuremath{\partial}}
\newcommand{\lra}{\ensuremath{\longrightarrow}}
\newcommand{\xra}{\ensuremath{\xrightarrow}}
\newcommand{\xla}{\ensuremath{\xleftarrow}}
\theoremstyle{plain}
\newtheorem{theorem}{Theorem}[section]
\newtheorem{corollary}[theorem]{Corollary}
\newtheorem{lemma}[theorem]{Lemma}
\theoremstyle{definition}
\newtheorem{subsec}[theorem]{}
\newtheorem{example}[theorem]{Example}
\newtheorem*{Subsec}{}
\newtheorem{subsubsec}{}
\numberwithin{subsubsec}{theorem}
\theoremstyle{remark}
\newtheorem{remark}[theorem]{Remark}
\newtheorem{conjecture}[theorem]{Conjecture}
\newtheorem{question}[theorem]{Question}
\newtheorem{problems}[theorem]{Questions}
\newtheorem*{Remark}{Remark}
\numberwithin{equation}{theorem}
\begin{document}

\title[Local rings of embedding codepth $3$]
      {A cohomological study of local rings \\
      of embedding codepth $3$}

\author[L.~L.~Avramov]{Luchezar L.~Avramov}
\address{Department of Mathematics,
   University of Nebraska, Lincoln, NE 68588, U.S.A.}
     \email{avramov@math.unl.edu}

\thanks{The author was partly supported by NSF grants DMS 0803082 and DMS 1103176}

\subjclass[2000]{Primary 13D07, 13D40}

  \date{\today}
  
\keywords{Local ring, Gorenstein ring, free resolution, DG algebra, Bass numbers}

  \begin{abstract}
The generating series of the Bass numbers $\mu^i_R=\rank_k\ext iRkR$ of local rings $R$ 
with residue field $k$ are computed in closed rational form, in case the embedding dimension 
$e$ of $R$ and its depth $d$ satisfy  $e-d\le3$.  For each such $R$ it is proved that there is 
a real number $\gamma>1$, such that $\mu^{d+i}_R\ge\gamma\mu^{d+i-1}_R$ holds for all 
$i\ge 0$, except for $i=2$ in two explicitly described cases, where $\mu^{d+2}_R=\mu^{d+1}_R=2$.  
New restrictions are obtained on the multiplicative structures of minimal 
free resolutions of length $3$ over regular local rings.  
  \end{abstract}

\maketitle

\section*{Introduction}

The paper concerns cohomological invariants of commutative noetherian local rings.
Let $R$ be such a ring, $\fm$ its maximal ideal, and let $d$ denote the depth of~$R$ 
and $e$ the minimal number of generators of $\fm$.  The number $e-d$ is called
the \emph{embedding codepth} of $R$.  It is equal to the length of a minimal free 
resolution $F$ of $\widehat R$ over $P$, where $\widehat R$ is the $\fm$-adic 
completion of $R$ and $P$ a regular local ring of dimension $e$, for which there 
is an isomorphism $\widehat R\cong P/I$; 
such an isomorphism always exists, due to Cohen's Structure Theorem.  For 
$c\le2$ the structure of $F$, and hence that of $\widehat R$, is determined 
by the Hilbert-Burch Theorem.

This paper is mostly concerned with rings of codepth $3$, so we assume $c=3$
for the rest of the introduction.   There exist then integers $l\ge2$ and $n\ge1$,
such that
  \[
      \tag{1}
F=\quad
0\longrightarrow P^n\xra{\ \dd_3\ } P^{n+l}\xra{\ \dd_2\ } 
P^{l+1}\xra{\ \dd_1\ } P\longrightarrow 0
  \]
The maps $\dd_i$ are known in a few cases only.  Buchsbaum and Eisenbud 
described them in \cite{BE1} for $l=2$, and in~\cite{BE2} when $R$ is 
Cohen-Macaulay with $l=3$ or $n=1$.  A.~Brown determined $\dd_i$ for 
certain Cohen-Macaulay rings with $n=2$; see~\cite{Br}.

The proofs of those theorems use the fact that $F$ can be turned into a 
graded-commutative DG (that is, differential graded) algebra; see~\cite{BE2}.  
Such a structure is not unique in general, but the isomorphism class of the 
graded $k$-algebra 
  \[
      \tag{2}
A=F\otimes_Pk\,,
\quad\text{where}\quad
k=R/\fm\,,
  \]
is an invariant of~$R$.  The possible isomorphism classes were determined by 
Weyman~\cite{We} in characteristic zero and by Avramov, Kustin, and Miller 
\cite{AKM} in general.  The remarkable fact is that for fixed $l$ and $n$ there 
exist only finitely many possibilities for $A$, described explicitly by simple 
multiplication tables.  These are reviewed in Section \ref{S:Background},
along with other background material.

We are interested in classifying non-Gorenstein rings.  A natural tool for the
task is provided by the \emph{Bass numbers} $\mu^i_R=\rank_k\ext iRkR$,
which are positive for all $i>d$ when $R$ is not Gorenstein, but vanish when 
it is.  The \emph{Bass series} $\ba R=\sum_{i\ges0}\mu^i_R\,t^i$ 
offers a useful format for recording the Bass numbers of $R$.

As our first result, Theorem \ref{thm:series}, we obtain in closed form expressions
  \[
      \tag{3}
\ba R=\frac{f(t)}{g(t)}
\quad\text{and}\quad
\Po Rk=\frac{(1+t)^{e-1}}{g(t)}
\quad\text{with}\quad 
f(t),g(t)\in\BZ[t]\,,
  \]
where $\Po Rk=\sum_{i\ges0}\rank_k\Tor iRkk\,t^i$ is the \emph{Poincar\'e
series} of $k$.  That such expressions \emph{exist} follows from \cite{AKM}, 
via \cite{Fo2}, and $g(t)$ was computed in \cite{Av:msri}.  

For the goals of this paper we need the precise form of $f(t)$ as well.  
In Section~\ref{S:Series} the series $\ba R$ and $\Po Rk$ are computed in parallel.
Work in \cite{AFL} and~\cite{Av:small} reduces the problem to finding $\ba A$ and 
$\Po Ak$ for the algebra $A$ in (2).  To compute these series we use a battery of change-of-rings 
results, which are analogs of known theorems over local rings.  Translation to the 
context of graded-commutative $k$-algebras requires changes in statements and 
proofs; these are discussed in Appendix~\ref{S:Graded algebras}.

It has long been known that that for Gorenstein rings $l$ is even, see \cite{Wa}, and that 
$R$ is Gorenstein if and only if $A$ has Poincar\'e duality, see \cite{AG}, so $n=1$.  Furthermore,
$R$ is complete intersection if and only if $A$ is an exterior algebra, see \cite{As}, 
and then $l=c-1$.  In Theorem \ref{thm:class} we prove that membership in each 
one of the remaining classes imposes new restrictions on the numbers $l$ 
and $n$.  The arguments introduce ideas that have not
been applied earlier in this context, such as utilizing the DG module structure of 
$\Hom PFP$ over the DG algebra $F$ from (1), and analyzing the growth of the 
Betti numbers of $\widehat R$ over complete intersection quotient rings of~$P$.  

In the first three sections the focus is on the structure of rings of codepth $3$.  The 
last section is motivated by open problems on the behavior of Bass sequences of local 
rings in general.  In the introduction of \cite{CSV}, Christensen, Striuli, and Veliche collect 
precise questions and give a comprehensive survey of earlier results.  

Theorem \ref{thm:growth} gives complete answers in codepth $3$: When $R$ 
is not Gorenstein
  \[
      \tag{4}
\mu_R^{d+i}\ge\gamma\mu_R^{d+i-1}
  \]
holds for some real number $\gamma>1$ and every integer $i\ge1$, with a single exception:
  \[
      \tag{5}
\mu^{d+2}_R=\mu^{d+1}_R=2\quad\text{when}\quad \widehat R\cong P/(wx,wy,z)
  \]
and $w$ is $P$-regular, $x,y$ is a $P$-regular sequence, and $z$ is $P/(wx,wy)$-regular.   
In particular, we recover the \emph{asymptotic} information known from earlier work: The 
Bass sequence of $R$ eventually is either constant or grows exponentially, 
see \cite{Av:msri}; it is unbounded when $R$ is Cohen-Macaulay, but not Gorenstein, see 
Jorgensen and Leuschke~\cite{JL}; if it is unbounded, then (4) holds for $i\gg0$, 
see~Sun \cite{Su}.

Neither the inequalities in (4), nor the description of the exceptions in (5), are formal 
consequences of the rational expressions in (3).  In fact, extracting information on the 
Taylor coefficients of a rational function from expressions for its numerator and 
denominator is classically known to be a very hard problem.  

Our approach is to prove first that $\mu_R^{d+i}>\mu_R^{d+i-1}$ holds, with the 
exceptions in (5), by drawing on three distinct sources---the expressions of the 
coefficients of $f(t)$ and $g(t)$ from Theorem \ref{thm:series}, the relations between 
those coefficients implied by Theorem~\ref{thm:class}, and certain growth properties 
of the Betti numbers of $k$ that are satisfied whenever $R$ is not complete intersection.  
Once the growth of the Bass sequence is established, Theorem~\ref{thm:growth} 
easily follows from results in \cite{Av:msri} and \cite{Su}.   

Since no additional effort is involved, all the results in the paper are stated and proved
for local rings of embedding codepth \emph{at most} $3$.

\section{Background}
    \label{S:Background}

In this paper we say that $(R,\fm,k)$ is a local ring if $R$ is a commutative 
noetherian ring, $\fm$ its unique maximal ideal, and $k=R/\fm$.  Recall the
invariants
  \[
\edim R=\rank_k(\fm/\fm^2)
\quad\text{and}\quad 
\depth R=\inf\{i\in\BZ\mid\mu^i_R\ne0\}\,.
  \]

  \begin{subsec}
    \label{invariants}
The following notation is fixed for the rest of the paper:
  \[
e=\edim R\,, \quad d=\depth R\,,\quad c=e-d\,,  \quad\text{and}\quad h=\dim R-d\,.
  \]

We write $K$ for the Koszul complex on a minimal set of generators of $\fm$.  It is
a DG algebra over $R$, so its homology is a graded algebra with $\HH_0(K)=k$.  We set
  \[
A=\HH(K)
  \]
and fix notation for the ranks of some $k$-vector spaces associated with $A$:
 \setcounter{equation}{0}
   \begin{equation*}
  \begin{aligned}
l&=\rank_kA_1-1     \qquad & p&=\rank_k(A_1^2)
  \\
m&=\rank_kA_2         & q&=\rank_k(A_1\cdot A_2)
  \\
n&=\rank_kA_3     & r&=\rank_k(\delta_2)
  \end{aligned}
  \end{equation*}
where $\delta_2\col A_2\to\Hom k{A_1}{A_3}$ is defined by $\delta_2(x)(y)=xy$ 
for $x\in A_{2}$ and $y\in A_1$.
  \end{subsec}
 
    \begin{subsec}
    \label{cst}
Let $\widehat R$ denote the $\fm$-adic completion of $R$.  
Cohen's Structure Theorem yields $\widehat R\cong P/I$ for some regular
local ring $(P,\fp,k)$ with $\dim P=e$; that is, $I\subseteq\fp^2$.

When $I$ can be generated by a regular sequence $R$ 
is said to be \emph{complete intersection}; this property is independent
of the choice of presentation, see \cite[2.3.4(a)]{BH}.

Let $F$ be a minimal free resolution of $\widehat R$ over $P$ and 
$L$ be the Koszul complex on a minimal generating set of the maximal
ideal of $P$.   There are natural maps
  \begin{equation}
    \label{eq:chain}
K=R\otimes_RK\xra{\simeq}\widehat R\otimes_RK\xra{\cong}\widehat R\otimes_PL
\xla{\simeq}F\otimes_PL\xra{\simeq}F\otimes_Pk\,;
  \end{equation}
the symbol $\simeq$ denotes a quasi-isomorphism.  In particular, an equality
  \begin{equation}
    \label{eq:switch}
\rank_kA_i=\rank_P F_i
  \end{equation}
holds for every integer $i$.  The Auslander-Buchsbaum Equality now yields 
  \begin{equation}
    \label{eq:ab}
\max\{i\mid A_i\ne0\}=c=\operatorname{pd}_P\widehat R\,.
  \end{equation}

Krull's Principal Ideal Theorem gives the inequalities below; the first equality is \eqref{eq:switch} 
for $i=1$; the third one comes from the catenarity of the regular ring $P$:
  \begin{equation}
     \label{eq:codim}
l+1=\rank_k(I/\fp I)\ge \height_P(I)=e-\dim R=c-h\ge0\,.
  \end{equation}
By definition, the second inequality becomes an equality if and only if $R$ is \emph{regular}.
Since $P$ is Cohen-Macaulay, the first inequality becomes an equality precisely when 
$I$ is generated by a regular sequence; that is, when $R$ is complete intersection.  

When $c\le3$, the equality $\sum_{i\ges 0}(-1)^{i}\rank_PF_i=0$, \eqref{eq:switch}, and 
\eqref{eq:ab} give
  \begin{equation}
    \label{eq:m}
m=l+n\,.
  \end{equation}
    \end{subsec}

The following classification is the starting point for our work and is used throughout 
the paper.  As always, $\bigwedge_k$ denotes the exterior algebra functor.  The functors 
$\shift$ and $\Hom k-{\shift^3k}$ and the construction $\ltimes$ are defined below, in \ref{DG}.

  \begin{subsec}
    \label{table}
If $c\le3$, then up to isomorphism $A$ is described by the following table, 
where $B$, $C$, and $D$ are graded $k$-algebras, and $W$ a graded 
$B$-module with $(B_+)W=0$:

%

\medskip\noindent
\centerline{
 \begin{tabular}{l||l|l|l|l|l}
{Class} \hfill{\hbox{[range]}}
&$c$
& $A$
& $B$
& $C$
& $D$
\\
\hline\hline
$\mathbf{C}(c)$\hfill{\hbox{$[c\ge0]$}}
& $c$
& $B$
& $\textstyle{\bigwedge}_k\shift k^c$
&
&
  \\
$\mathbf{S}$
& $2$
& $B\ltimes W$
& $k$
&
&
  \\
$\mathbf{T}$
& $3$
& $B\ltimes W$
&$C\ltimes\shift(C/C_{\ges 2})$
& $\textstyle{\bigwedge}_k\shift k^2$
&
  \\
$\mathbf{B}$
& $3$
& $B\ltimes W$
& $C\ltimes\shift C_+$
& $\textstyle{\bigwedge}_k\shift k^2$
&
  \\
$\mathbf{G}(r)$\hfill{\hbox{$[r\ge2]$}}
& $3$
& $B\ltimes W$
& $C\ltimes \Hom kC{\shift^3k}$
& $k\ltimes\shift k^r$
&
  \\
$\mathbf{H}(p,q)$ \hfill{\hbox{$[p,q\ge0]$}}
&  $3$
& $B\ltimes W$
& $C\otimes_kD$
& $k\ltimes(\shift k^p\oplus\shift^2 k^q)$\ 
& $k\ltimes\shift k$ 
  \end{tabular}
  }
  
\medskip\noindent
No two algebras $A$ in the table are isomorphic, and neither are any two algebras $B$.  

The table is compiled as follows.  If $c\le1$, then $A_i=0$ for $i>c$
and $A_1\cong k^c$, by \eqref{eq:ab} and \eqref{eq:switch}, whence 
$A\cong{\bigwedge}_k\shift k^c$.  If $c=2$, then $F$ is given by the 
Hilbert-Burch Theorem; an explicit multiplication on $F$, see \cite[2.1.2]{Av:barca}, 
yields $A\cong{\bigwedge}_k\shift k^2$ or $A\cong k\ltimes W$.  When $c=3$ the 
possible isomorphism classes of $A$ are determined in \cite[Proof of 4.1]{We} 
when $k$ has characteristic $0$, and in \cite[2.1]{AKM} in general.
    \end{subsec}

In some cases, the class of a ring and its structure determine each other:

  \begin{subsec}
     \label{euler}
Let $R$ be a local ring with $\edim R-\depth R=c\le3$.

 \begin{subsubsec}
     \label{CI}
The ring $R$ is complete intersection of codimension $c$ if and only if it is in $\mathbf{C}(c)$,
as proved by Assmus \cite[2.7]{As}, see also \cite[2.3.11]{BH}; for such rings $l=c-1$. 
   \end{subsubsec}

 \begin{subsubsec}
     \label{Gorl}
The ring $R$ is Gorenstein, but not complete intersection, if and only if it is in 
$\mathbf{G}(r)$ with $l=r-1$ and $n=1$; for such rings $l$ is even and $l\ge4$.

Indeed, $R$ is Gorenstein if and only if $A$ has Poincar\'e duality, by \cite{AG},
and then $l$ is even, by J.~Watanabe \cite[Thm.]{Wa}; alternatively, see 
\cite[2.1]{BE2} or \cite[3.4.1]{BH}.
  \end{subsubsec}

  \begin{subsubsec}
     \label{golod}
The ring $R$ is Golod if and only if it is in $\mathbf{S}$ or in $\mathbf{H}(0,0)$.

By definition, $R$ is Golod if and only if all Massey products of elements of $A_+$ 
are trivial. The binary ones are just ordinary products.  Massey products of three or 
more elements have degree at least $4$, and $A_i=0$ for $i\ge4$.  Thus, $R$ is 
Golod if and only if $A_+^2=0$.  By \ref{table}, this occurs precisely for the
rings in $\mathbf{S}$ or $\mathbf{H}(0,0)$. 
  \end{subsubsec}
    \end{subsec}

We recall a modicum of notation and facts concerning DG modules.

  \begin{subsec}
     \label{DG}
Let $E$ be a DG algebra over a commutative ring $S$.  We assume
$E_i=0$ for $i<0$ and that $E$ is \emph{graded-commutative}, meaning 
that $xy=(-1)^{ij}yx$ holds for all $x\in E_i$ and $y\in E_j$ and $x^2=0$ 
when $i$ is odd.  The DG algebra $E$ acts on its module $M$ from the left. 
All differentials have degree $-1$.  A \emph{morphism} of DG modules
is a degree zero $E$-linear map that commutes with the differentials;
if it induces isomorphisms in homology in all degrees, it is called 
a \emph{quasi-isomorphism}.

For every $s\in\BZ$, set $(\shift^s M)_j=M_{j-s}$ for $j\in\BZ$.  The identity 
maps on $M_j$ define a bijective map $\varsigma^s\col M\to\shift^sM$ of 
degree $s$.  Setting $\dd^{\shift^s M}(\varsigma(m))=(-1)^s\varsigma^{s}(\dd^M(m))$ 
and $x\varsigma^s(m)=(-1)^{is}\varsigma(xm)$ for every $x\in E_i$ turns  
$\shift^s M$ into a DG $E$-module. 

Recall that $\Hom SM{\shift^sS}$ denotes the DG $E$-module with 
$\Hom SM{\shift^sS}_j=\Hom S{M_{s-j}}S$, differential $\dd(\mu)(m)=(-1)^{j+1}\mu\dd(m)$
for $\mu\in\Hom SM{\shift^sS}_j$, and $E$ acting by 
$(x\mu)(m)=(-1)^{ij}\mu(xm)$ for $x\in E_{i}$.  Set $M^*=\Hom SM{S}$.

The \emph{trivial extension} $E\ltimes M$ is the DG algebra with 
underlying complex $E\oplus M$ and product 
$(x,m)(x',m')=(xx', xm'+(-1)^{ji'}x'm)$ for $x'\in E_{i'}$ and $m\in M_j$.
  \end{subsec}

  \begin{subsec}
     \label{DGD}
Let $E$ be a DG algebra over $S$, and let $M$ and $N$ be DG $E$-modules

Modules $\Tor iEMN$ and $\ext iEMN$ over the ring $S$ are defined for 
every integer $i$, see \cite[\S1]{AH}.  If 
$E$ is a ring, considered as a DG algebra concentrated in degree $0$, and
$M$ and $N$ are $E$-modules, treated as DG modules in a similar way, 
then these derived functors coincide with the classical ones.  When 
$k$ is a field $E\to k$ is a homomorphism of DG algebras, and the 
$k$-vector spaces $\Tor iEMk$ and $\ext iEkN$ have finite rank for each 
$i$ and vanish for $i\ll0$,  we set 
  \begin{align}
    \label{eq:defP}
\Po EN&=\sum_{i\in\BZ}\rank_k\Tor iEMk\, t^i\in\BZ[\![t]\!][t^{-1}]\,.
 \\
    \label{eq:defI}
\baa EN&=\sum_{i\in\BZ}\rank_k\ext iEkN\, t^i\in\BZ[\![t]\!][t^{-1}]\,.
  \end{align}

Every morphism of DG algebras $\varepsilon\col E'\to E$ induces natural 
homomorphisms of $S$-modules $\Tor i{E'}MN\to\Tor iEMN$ 
and $\ext iEMN\to\ext i{E'}MN$ for each $i\in\BZ$.  These maps are 
bijective when $\varepsilon$ is a quasi-isomorphism.   

A \emph{graded algebra} over $S$ is a DG algebra with zero differential; a
\emph{graded module} over a graded algebra is a DG module with zero differential.
  \end{subsec}

  \begin{subsec}
    \label{cst2}
Let $K$ be the Koszul complex $K$ described in \ref{invariants}.
The natural map $K\to k$ turns $k$ into a DG $K$-module.  {From}
\cite[3.2]{Av:small} and \cite[4.1]{AFL}, respectively, we get
  \[
\Po Rk=(1+t)^e\cdot\Po Kk
  \quad\text{and}\quad
\ba R=t^{e}\cdot\ba K\,.
  \]

If the resolution $F$ in \ref{cst} has a structure of DG algebra over $P$, 
then the natural surjection $F_0=P\to k$ turns $k$ into a DG $F$-module.
The maps in \eqref{eq:chain} then are morphisms of DG algebras,
so we get isomorphisms of DG algebras
  \begin{equation}
    \label{eq:DGchain}
F\otimes_Pk=\HH(F\otimes_Pk)\cong A\,.
  \end{equation}
The invariance under quasi-isomorphisms of the DG derived functors in
\ref{DGD} gives 
  \[
\Po Kk=\Po Ak
  \quad\text{and}\quad
\ba K=\ba A\,.
  \]

When $c\le3$ we have $F_i=0$ for $i>3$, see \eqref{eq:ab}, so $F$ supports
a structure of DG algebra over $P$ by \cite[1.3]{BE2}; see also \cite[2.1.4]{Av:barca}.  
Thus, in this case we have
  \begin{align}
    \label{eq:Preduction}
\Po Rk&=(1+t)^e\cdot\Po Ak\,.
  \\
     \label{eq:Breduction}
\ba R&=t^{e}\cdot\ba A\,.
  \end{align}
  
Techniques for computing Poincar\'e series and Bass series over graded
algebras are presented in Appendix \ref{S:Graded algebras}, along with a 
number of examples.
   \end{subsec}

\section{Bass series}
    \label{S:Series}

Our goal in this is section is to prove the following result.	

 \begin{theorem}
    \label{thm:series}
Let $(R,\fm,k)$ be a local ring, set $d=\depth R$ and $e=\edim R$, and let 
$l$, $n$, $p$, $q$, and $r$ be the numbers defined in \emph{\ref{invariants}}.

When $e-d=c\le3$ there are equalities 
  \begin{equation*}
\Po Rk=\frac{(1+t)^{e-1}}{g(t)}
  \quad\text{and}\quad
\ba R=t^d\cdot\frac{f(t)}{g(t)}
  \end{equation*}
where $f(t)$ and $g(t)$ are polynomials in $\BZ[t]$, listed in the
following table: 

\medskip\noindent
\centerline{
   \begin{tabular}{l||l|l}
\emph{Class}\qquad
&$g(t)$
&$f(t)$
\\ 
\hline\hline
&& \\
$\mathbf{C}(c)$
& $(1-t)^c(1+t)^{c-1}$
& $(1-t)^c(1+t)^{c-1}$
\\&& \\
$\mathbf{S}$
& ${1-t-lt^2}$
& ${l+t-t^2}$
  \\ && \\
$\mathbf{T}$
& ${1-t-lt^2-(n-3)t^3-t^5}$
& ${n+lt-2t^2-t^3+t^4}$
  \\ && \\
$\mathbf{B}$
& ${1-t-lt^2-(n-1)t^3+t^4}$
& ${n+(l-2)t-t^2+t^4}$
  \\ && \\
$\mathbf{G}(r)$
& ${1-t-lt^2-nt^3+t^4}$
& ${n+(l-r)t-(r-1)t^2-t^3+t^4}$\qquad
  \\ && \\
$\mathbf{H}(0,0)$
& ${1-t-lt^2-nt^3}$\qquad
& ${n+lt+t^2-t^3}$
  \\ && \\
$\mathbf{H}(p,q)$
& ${1-t-lt^2-(n-p)t^3+qt^4}$\qquad
& ${n+(l-q)t-pt^2-t^3+t^4}$
  \\ 
${p+q\ge1}$ &&
 \end{tabular}
  }
   \end{theorem}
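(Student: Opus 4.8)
The plan is to transfer the whole problem to the graded $k$-algebra $A$ and then take apart, one layer at a time, the iterated trivial extensions recorded in \ref{table}. By \eqref{eq:Preduction} and \eqref{eq:Breduction} one has $\Po Rk=(1+t)^e\,\Po Ak$ and $\ba R=t^e\,\ba A$, and since $e=d+c$ the two asserted formulas are equivalent to $\Po Ak=\big((1+t)\,g(t)\big)^{-1}$ and $\ba A=t^{-c}\,f(t)/g(t)$. So it is enough to compute $\Po Ak$ and $\ba A$ for each of the seven graded $k$-algebras of \ref{table}, as functions of the class and of the numbers $l,n,p,q,r$ of \ref{invariants}; equivalently, by the quasi-isomorphisms of \ref{cst2}, to compute $\Po Kk$ and $\ba K$. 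Throughout, $t$ records the total degree (resolution degree plus internal $A$-degree), so that $\Po Ak$ and $\ba A$ are genuinely one-variable series and, in particular, $\ba A$ may begin in degree $-c$.

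\textbf{The change-of-rings package.} The engine is a set of change-of-rings identities for graded $k$-algebras, assembled and proved in Appendix~\ref{S:Graded algebras}: for a trivial extension $E\ltimes M$ the short exact sequence $0\to M\to E\ltimes M\to E\to0$ of $E\ltimes M$-modules, together with the long exact sequences it induces in $\operatorname{Tor}^{E\ltimes M}(-,k)$ and $\operatorname{Ext}_{E\ltimes M}(k,-)$, yields $\Po{E\ltimes M}{k}=\Po Ek\big/\!\big(1-t\,\baa EM\big)$ where $\baa EM$ is here the Poincar\'e series of $M$ over $E$, together with a parallel expression for $\ba{E\ltimes M}$ in terms of $\ba E$, the Bass series of $M$ over $E$, and $\Po Ek$ --- the underlying fact being that $E\ltimes M\to E$ is a Golod homomorphism; and the K\"unneth identities give $\Po{C\otimes_kD}{k}=\Po Ck\cdot\Po Dk$ and $\ba{C\otimes_kD}=\ba C\cdot\ba D$. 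To this one must add the series of the building blocks that actually occur: $\Po{\bigwedge_k\shift k^s}{k}=(1-t^2)^{-s}$ and $\ba{\bigwedge_k\shift k^s}=t^{-s}$ (a graded complete intersection has Poincar\'e duality with socle in degree $s$); the Golod formula $\Po Ck=\big(1-\sum_{i\ges1}(\rank_kC_i)\,t^{i+1}\big)^{-1}$ and its Bass counterpart for $C=k\ltimes\shift k^r$, $C=k\ltimes(\shift k^p\oplus\shift^2k^q)$, $C=k\ltimes\shift k$; and the resolutions over such $C$ of the distinguished middle-layer modules $C_+$, $C/C_{\ges2}$, $\Hom kC{\shift^3k}$ --- obtained from $0\to C_+\to C\to k\to0$, $0\to\shift^2k\to C\to C/C_{\ges2}\to0$, and from the identification of $\Hom kC{\shift^3k}$ as a shift of the injective hull of $k$ (so that its Betti numbers over $C$ are the Bass numbers of $C$).

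\textbf{Execution and checks.} With this in hand the seven classes are handled in turn: for a given class one reads off $B$, $C$, $D$ and the graded vector space $W$, writes the Hilbert series of $W$ through $l,n,p,q$ using \ref{invariants} and the relation $m=l+n$ of \eqref{eq:m}, and then computes from the inside out: $\Po Ck,\ba C$ (and $\Po Dk,\ba D$) $\longrightarrow$ $\Po Bk,\ba B$ $\longrightarrow$ $\Po Ak,\ba A$ $\longrightarrow$ $\Po Rk,\ba R$, extracting $f$ and $g$ at the end. The cases $\mathbf C(c)$, $\mathbf S$, $\mathbf H(0,0)$ fall straight out of the building-block data (the last two being Golod by \ref{golod}); $\mathbf T$, $\mathbf B$, $\mathbf H(p,q)$ require the trivial-extension identity twice, fed by the middle-layer module resolution; and in $\mathbf G(r)$ the middle algebra $B=C\ltimes\Hom kC{\shift^3k}$ has Poincar\'e duality with socle in degree $3$, whence $\ba B=t^{-3}$, after which one last trivial extension finishes. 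I would then cross-check the output against all that is already known: $\ba R=t^d$, i.e.\ $f=g$, in the Gorenstein subcases of $\mathbf C$ and $\mathbf G$ by \ref{CI} and \ref{Gorl}; the classical $\Po Rk=(1+t)^e(1-t^2)^{-c}$ in the complete intersection case; and the Golod formula $\Po Rk=(1+t)^e\big(1-\sum_{i\ges1}(\rank_kA_i)t^{i+1}\big)^{-1}$ in $\mathbf S$ and $\mathbf H(0,0)$. These consistency requirements pin down all signs and exponents.

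\textbf{The main obstacle.} Once the package is in place the Poincar\'e-series side is essentially bookkeeping; the substantive difficulties are all on the Bass side. The injective change-of-rings theorems for local rings that underlie this kind of computation --- the identities behind \cite{Av:msri}, and the results of Levin and of Avramov--Foxby on trivial extensions and on base change --- are not available off the shelf for graded $k$-algebras and must be restated and reproved, which is precisely the purpose of Appendix~\ref{S:Graded algebras}; and even granting them, the delicate steps are the Bass-series formula for $E\ltimes M$ and the treatment of $\Hom kC{\shift^3k}$ together with the self-injective middle layer in class $\mathbf G(r)$. A close second obstacle is keeping the grading convention disciplined, so that the bigraded long exact sequences actually collapse to the single-variable rational functions in the table.
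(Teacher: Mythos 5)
Your reduction to $\Po Ak$ and $\ba A$ via \eqref{eq:Preduction} and \eqref{eq:Breduction}, and your plan to peel off the trivial extensions from the outside in using the Appendix~\ref{S:Graded algebras} formulas, match the paper's proof on the Poincar\'e side and for the outer layer $A=B\ltimes W$ (where $B_+W=0$, so \eqref{eq:trivialI} applies). Where you part ways with the paper is at the inner layer $B=C\ltimes W'$ for the classes $\mathbf T$ and $\mathbf B$: there $C_+W'\neq0$, so the Bass-series analogue \eqref{eq:trivialI} that the appendix actually proves does \emph{not} apply, and the ``parallel expression for $\ba{E\ltimes M}$'' you invoke is a strictly more general statement than anything established in the paper. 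The paper sidesteps this: for $\mathbf T$ it identifies $B\cong E/E_{\ges3}$ with $E=\bigwedge_k\shift k^3$ and invokes the truncation formula \eqref{eq:truncatedI}; for $\mathbf B$ it hand-builds the short exact sequence \eqref{eq:B1} of graded $B$-modules, determines $\Ker\pi$ explicitly from the multiplication table \eqref{eq:B2}, and reads off $\ba B=\Po B{B^*}$ via \eqref{eq:dual} and \eqref{eq:maximalP}. Your route could be made to work: from \eqref{eq:syzygyC}, the splitting $B_+=C_+\oplus W'$, and Herzog's algebra-retract result one can show $\ba B/\Po Bk=\ba C/\Po Ck+\baa C{W'}/\Po Ck$ without the hypothesis $C_+W'=0$, and then $\baa C{W'}=\Po C{(W')^*}$ is computable because Poincar\'e duality of $C=\bigwedge_k\shift k^2$ identifies $(W')^*$ as a shift of $C_+$ (for $\mathbf T$) or of $C/C_{\ges2}$ (for $\mathbf B$). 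But those steps are not in the paper's toolkit and are not spelled out in your sketch, so as written your ``trivial-extension identity twice'' would stall precisely at $\mathbf T$ and $\mathbf B$. Relatedly, you flag $\mathbf G(r)$ as the delicate case, but it is the easy one: $B=C\ltimes\Hom kC{\shift^3 k}$ has Poincar\'e duality in degree~$3$, so $\ba B=t^{-3}$ falls out of \eqref{eq:exteriorI} (equivalently \eqref{eq:null2I}) with no work; the real resistance is concentrated in $\mathbf T$ and, above all, $\mathbf B$.
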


%

All of the Poincar\'e series and a smattering of the Bass series above are known:

  \begin{remark}
Since rings in $\mathbf{C}(c)$ are complete intersection, see \ref{CI}, 
the formula for $\Po Rk$ is due to Tate \cite[Thm.\,6]{Ta}; we have $\ba R=t^d$
because $R$ is Gorenstein. 

When $R$ is in $\mathbf{G}(r)$ with $r=l+1$ and $n=1$, it is Gorenstein 
by \ref{Gorl}.  The formula for $\Po Rk$ then is due to Wiebe \cite[Satz 9]{Wi};
the other formula gives $\ba R=t^d$.

When $R$ is Golod, $\Po Rk$ is given by Golod \cite{Go} and $\ba R$ by 
Avramov and Lescot \cite{AL}.  In view of \ref{golod}, this covers the rings $R$ in 
$\mathbf{S}$ and $\mathbf{H}(0,0)$; for $R$ in $\mathbf{S}$, Scheja \cite[Satz 9]{Sc} 
computed $\Po Rk$ and Wiebe \cite[Satz~8]{Wi} calculated $\ba R$.

The formulas for $\Po Rk$ in the remaining cases were obtained in \cite[3.5]{Av:msri}.
  \end{remark}

In the proof that follows the series $\Po Rk$ and $\ba R$ are computed 
simultaneously and in a uniform manner.  A separate calculation is needed for each class.

\begin{proof}[Proof of Theorem \emph{\ref{thm:series}}]
By \eqref{eq:Preduction} and \eqref{eq:Breduction}, it suffices to establish the equalities
  \begin{equation*}
\frac1{\Po Ak}=(1+t)\cdot{g(t)}
  \quad\text{and}\quad
\frac{\ba A}{\Po Ak}=t^{-c}\cdot(1+t)\cdot{f(t)}\,.
  \end{equation*}  

\begin{Subsec}[Class $\mathbf{C}(c)$]
The formulas come from \eqref{eq:exteriorP} and \eqref{eq:exteriorI}, respectively.
\end{Subsec}

\begin{Subsec}[Class $\mathbf{S}$]
The formulas come from \eqref{eq:nullP} and \eqref{eq:nullI}, respectively.
\end{Subsec}

\begin{Subsec}[Class $\mathbf{T}$]
The exact sequence $0\to\shift^2k\to C\to C/C_{\ges 2}\to0$ and formulas 
\eqref{eq:maximalP} and \eqref{eq:shift} give $\Po C{\shift(C/C_{\ges 2})}=t(1+t^3\Po Ck)$.
Now \eqref{eq:trivialP} and \eqref{eq:exteriorP} yield
 \begin{align*}
\frac1{\Po Bk}
=(1-t^2)^2\bigg(1-t^2\bigg(1+t^3\frac{1}{(1-t^2)^2}\bigg)\bigg)=1-3t^2+3t^4-t^5-t^6\,.
 \end{align*}
The isomorphism of $k$-algebras $B\cong E/E_{\ges 3}$ with 
$E=\textstyle{\bigwedge}_k\shift k^3$ and \eqref{eq:truncatedI} give
  \[
\frac{\ba B}{\Po Bk}=t^{-4}\big(1-(1-3t^2+3t^4-t^5-t^6)\big)-t={3t^{-2}-3+t^2}\,.
 \]
Using formulas \eqref{eq:trivialP} and \eqref{eq:trivialI} we now obtain:
 \begin{align*}
\frac1{\Po Ak}
&=(1-3t^2+3t^4-t^5-t^6)-t\big((l-2)t+(l+n-3)t^2+nt^3\big)
  \\
&=(1+t)\big(1-t-lt^2-(n-3)t^3-t^5\big)\,.
  \\
\frac{\ba A}{\Po Ak}
&=(3t^{-2}-3+t^2)+\big((l-2)t^{-1}+(l+n-3)t^{-2}+nt^{-3}\big)
  \\
&=t^{-3}(1+t)(n+lt-2t^2-t^3+t^4)\,.
 \end{align*}
\end{Subsec}

\begin{Subsec}[Class $\mathbf{B}$]
Using \eqref{eq:shift}, \eqref{eq:maximalP}, and \eqref{eq:exteriorP} we obtain
 \[
\Po C{\shift C_+}
=t\cdot\Po C{C_+}
=t\cdot t^{-1}\cdot(\Po Ck-1)
=\frac{1}{(1-t^2)^2}-1
=\frac{2t^2-t^4}{(1-t^2)^2}\,.
  \]
{From} formulas \eqref{eq:trivialP} and \eqref{eq:exteriorP} we now get
 \begin{align*}
\frac1{\Po Bk} 
&=(1-t^2)^2-t(2t^2-t^4)
=1-2t^2-2t^3+t^4+t^5\,.
 \end{align*}

Assume, for the moment, that there is an exact sequence of graded $B$-modules
  \begin{equation}
    \label{eq:B1}
0\lra\shift^{-2}B_+\oplus\shift^{-1}k\lra\shift^{-2}B\oplus\shift^{-3}B\lra B^*\lra0
  \end{equation}
where $B^*=\Hom kBk$. We then have a string of equalities, where the first one comes from 
\eqref{eq:dual}, and the second one from \eqref{eq:maximalP} and \eqref{eq:shift} applied to 
\eqref{eq:B1}:
   \begin{align*}
\frac{\ba B}{\Po Bk}
&=\frac{\Po B{B^*}}{\Po Bk}
\\
&=\frac{t(t^{-2}\cdot t^{-1}\cdot(\Po Bk-1)+t^{-1}\cdot\Po Bk)+t^{-3}+t^{-2}}{\Po Bk}
\\
&=1+t^{-2}+t^{-3}\cdot\frac{1}{\Po Bk}
  \\
&=t^{-3}+t^{-2}-2t^{-1}-1+t+t^2\,.
 \end{align*}
Formulas \eqref{eq:trivialP} and \eqref{eq:trivialI} now yield:
 \begin{align*}
\frac1{\Po Ak}
&=(1-2t^2-2t^3+t^4+t^5)
-t\big((l-1)t+(l+n-3)t^2+(n-1)t^3\big)
\\
&=(1+t)\big(1-t-lt^2-(n-1)t^3+t^4\big)\,.
\\
\frac{\ba A}{\Po Ak}
&=(t^{-3}+t^{-2}-2t^{-1}-1+t+t^2)
  \\
&{\phantom{{}={}(t^{-3}}}
+\big((l-1)t^{-1}+(l+n-3)t^{-2}+(n-1)t^{-3}\big)
 \\
&={t^{-3}(1+t)\big(n+(l-2)t-t^2+t^4\big)}\,.
 \end{align*}
 
It remains to construct the sequence \eqref{eq:B1}.  To do this we use the
module structures on suspensions and dual modules, described in \ref{DG}.
Recall from \ref{table}~that $B=C\ltimes\shift C_+$, with $C=\bigwedge_k\shift k^2$. 
Choose a basis $\{a_1,a_2\}$ for $C_2$.  With $b_i=(-1)^i\varsigma(a_i)$ for $i=1,2$,  
$a_3=a_1a_2$ and $b_3=a_1b_2$ the set $\boldsymbol{a}=\{1,a_i,b_i\}_{1\les i\les3}$ 
is a $k$-basis for $B$.  The non-zero products of elements of $\boldsymbol{a}$ are listed below:
  \begin{equation}
    \label{eq:B2}
a_1a_2=-a_2a_1=a_3
\quad\text{and}\quad 
a_1b_2=a_2b_1=b_1a_2=b_2a_1=b_3\,.
  \end{equation}

Let $\alpha\in(B^*)_{-2}$, respectively, $\beta\in(B^*)_{-3}$ be the $k$-linear 
map that sends $a_3$, respectively, $b_3$ to $1$, and 
the remaining elements of $\boldsymbol{a}$ to~$0$.  The map defined by
  \[
\pi\big(\varsigma^{-2}(x),\varsigma^{-3}(y)\big)=x\alpha-(-1)^iy\beta
  \]
for $x\in B_i$ and $y\in B_{i+1}$ is a morphism 
$\pi\col\shift^{-2}B\oplus\shift^{-3}B\to B^*$ of graded $B$-modules. 
Its image contains the basis of $B^*$ dual to $\boldsymbol{a}$, so $\pi$ is surjective.

Set $U=\Ker(\pi)$.  The surjectivity of $\pi$ implies $\rank_kU=7$.  Set
  \[
u_j=\big(\varsigma^{-2}(a_j),(-1)^j\varsigma^{-3}(b_j)\big)\,,
v_j=\big(\varsigma^{-2}(b_j),0\big)\,,
w=\big(0,\varsigma^{-3}(a_3)\big)\,,
  \] 
and $\boldsymbol{u}=\{u_j,v_j,w\}_{j=1,2,3}$.  It is easy to see that 
$\boldsymbol{u}$ is in $U$ and is linearly independent over $k$.
 Thus, $\boldsymbol{u}$ is a $k$-basis of $U$, so there is an isomorphism of vector spaces
  \[
\upsilon\col\shift^{-2}B_+\oplus\shift^{-1}k\xra{\,\cong\,} U
  \]
satisfying $\upsilon(a_j)=u_j$ and $\upsilon(b_j)=v_j$ for $j=1,2,3$, and $\upsilon(1)=w$.   
Simple calculations, using \eqref{eq:B2}, yield $\upsilon(bu)=b\upsilon(u)$ for all $b\in\boldsymbol{a}$
and $u\in\boldsymbol{u}$. This means that $\upsilon$ is $B$-linear, and so validates the 
exact sequence \eqref{eq:B1}.
   \end{Subsec}

\begin{Subsec}[Class $\mathbf{G}(r)$]
Formulas \eqref{eq:null2P} and \eqref{eq:null2I} give
 \begin{align*}
\frac1{\Po Bk}
&=1-rt^2-rt^3+ t^5\,.
  \\
 \frac{\ba B}{\Po Bk}
&=t^{-3}-rt^{-1}-r+t^2\,.
  \end{align*}
{From} formulas \eqref{eq:trivialP} and \eqref{eq:trivialI} we now obtain:
 \begin{align*}
\frac1{\Po Ak}
&=(1-rt^2-rt^3+t^5)-t\big((l+1-r)t+(l+n-r)t^2+(n-1)t^3\big)
  \\
&=(1+t)(1-t-lt^2-nt^3+t^4)\,.
  \\
\frac{\ba A}{\Po Ak}
&=(t^{-3}-rt^{-1}-r+t^2)+\big((l+1-r)t^{-1}+(l+n-r)t^{-2}+(n-1)t^{-3}\big)
 \\
&={t^{-3}(1+t)\big(n+(l-r)t-(r-1)t^2-t^3+t^4\big)}\,.
 \end{align*}
\end{Subsec}

\begin{Subsec}[Class $\mathbf{H}(p,q)$]  
When $p=0=q$ we have $A=k\ltimes W$, so 
\eqref{eq:trivialP} and \eqref{eq:syzygyC} give
  \begin{align*}
\frac 1{\Po Ak}
&=1-t\big((l+1)t+(l+n)t^2+nt^3\big)=(1+t)(1-t-lt^2-nt^3)\,.
  \\
\frac{\ba A}{\Po Ak}
&=(l+1)t^{-1}+(l+n)t^{-2}+nt^{-3}-t
=t^{-3}(1+t)(n+lt+t^2-t^3)\,.
 \end{align*}

When $(p,q)\ne(0,0)$, using  \eqref{eq:kunneth}, \eqref{eq:nullP},  and \eqref{eq:nullI} 
we get
 \begin{align*}
\frac 1{\Po Bk}
&=(1-pt^2-qt^3)\cdot(1-t^2)\,.
  \\
{\ba B}
&=\frac{qt^{-2}+pt^{-1}-t}{1-pt^2-qt^3}\cdot t^{-1}
=\frac{qt^{-3}+pt^{-2}-1}{1-pt^2-qt^3}\,.
 \end{align*}
{From} \eqref{eq:trivialP} and \eqref{eq:trivialI} we now obtain
 \begin{align*}
\frac1{\Po Ak}
&=(1-pt^2-qt^3)(1-t^2)-t\big((l-p)t+(l+n-p-q)t^2+(n-q)t^3\big)
  \\
&=(1+t)\big(1-t-lt^2-(n-p)t^3+qt^4\big)\,.
  \\
\frac{\ba A}{\Po Ak}
&=(qt^{-3}+pt^{-2}-1\big)(1-t^2)
  \\
&{\phantom{{}={}(qt^{-3}}}+\big((l-p)t^{-1}+(l+n-p-q)t^{-2}+(n-q)t^{-3}\big)
 \\
&=t^{-3}(1+t)\big(n+(l-q)t-pt^2-t^3+t^4\big)\,.
 \end{align*}
These formulas gives the desired expressions for $\Po Ak$ and $\ba A$.
\qedhere
\end{Subsec}
  \end{proof}

  \section{Classification}
     \label{S:Classification}

We significantly tighten the classification of rings $R$ of embedding codepth $3$,
recalled in \ref{table}, by proving that membership in each one of the classes 
described there imposes non-trivial restrictions on the numerical invariants 
of $R$.   Comparison with existing examples raises intriguing questions, discussed 
at the end of the section.

  \begin{theorem}
    \label{thm:class}
Let $(R,\fm,k)$ be a local ring with $\edim R-\depth R=c\le3$.

When $R$ is not Gorenstein the invariants from
\emph{\ref{invariants}} satisfy the following relations.



\medskip\noindent
\centerline{
 \begin{tabular}{l||l|r|l|l|l|l|l}
\emph{Class}
& $c$
& $h\le{\phantom{1}}$
& $l\ge$
& $n\ge$
& $p$
& $q$
& $r$
\\
\hline\hline
$\mathbf{S}$
& $2$
& $1$
& ${\phantom{\max\{}}2-h$
& ${\phantom{\max\{}}0=n$
& $0$
& $0$
& $0$
  \\
$\mathbf{T}$
& $3$
& $1$
& ${\phantom{\max\{}}3-h$
& ${\phantom{\max\{}}2$
& $3$
& $0$
& $0$
  \\
$\mathbf{B}$
& $3$
& $1$
& ${\phantom{\max\{}}4-h$
& ${\phantom{\max\{}}2-h$
& $1$
& $1$
& $2$
\\
$\mathbf{G}(r)$
& $3$
& $1$
& $\max\{4-h,r+1\}$
& ${\phantom{\max\{}}2-h$ 
& $0$
& $1$
& $r$
\\
$\mathbf{H}(p,q)$
& $3$
& $2$
& $\max\{3-h,p,q+1,2\}$
& $\max\{2-h,p-1,q,1\}$
& $p$
& $q$ 
& $q$
\\
  \end{tabular}
  }
  \end{theorem}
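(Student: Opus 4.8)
The plan is to assume from the start that $R=\widehat R=P/I$ with $I\subseteq\fp^2$, since every invariant in the statement is unchanged under completion, and then to work class by class, the common thread being that $l,n,p,q,r$ are read off from the graded $k$-algebra $A=\HH(K)$, whose multiplication is given in \ref{table}, while $h$ is governed by $\height_P I=c-h$. The columns $p$, $q$, $r$ are purely algebraic: in every class $A=B\ltimes W$ with $A_+W=0$, so the products $A_1A_1$, $A_1A_2$ and the pairing $\delta_2$ are computed inside $B$, whose multiplication table is explicit. A short computation in each case yields the listed values; for instance in $\mathbf H(p,q)$ one has $B=C\otimes_kD$ with $C_1\cong k^p$, $C_2\cong k^q$, $D_1\cong k$, so that $B_1^2\cong k^p$, $B_1B_2\cong k^q$, and $\delta_2$ kills the $C_1\otimes_kD_1$-summand of $B_2$ and is injective on the rest, of rank $q$.

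The same rank count gives the "structural" inequalities: writing $l=(\rank_kB_1-1)+\dim_kW_1$ and $n=\rank_kB_3+\dim_kW_3$ and inserting the explicit $B$ from \ref{table} produces $l\ge p$, $n\ge q$ in $\mathbf H$, and the analogous bounds in the other classes. Two further inputs are elementary. First, \eqref{eq:codim}: since $R$ is a complete intersection only in the classes $\mathbf C(c)$, the first inequality there is strict, so $l+1\ge(c-h)+1$, i.e. $l\ge c-h$; this is the "$\ge c-h$" part of the $l$-column for $\mathbf S$, $\mathbf T$, $\mathbf H$. Second, Theorem \ref{thm:series} shows $\mu^d_R=f(0)$, which equals $n$ for $c=3$ and $l$ for $c=2$, so these numbers are positive automatically, and, when $R$ is Cohen--Macaulay, they equal $\operatorname{type}R$, hence are $\ge2$ since $R$ is not Gorenstein.

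The remaining, harder bounds --- $h\le c-2$ in $\mathbf S$ (automatic when $c=2$), $\mathbf T$, $\mathbf B$, $\mathbf G$; the sharpenings $l\ge c+1-h$ and $l\ge r+1$ in $\mathbf B$, $\mathbf G$; the relation $n\ge2$ in $\mathbf T$; and $l\ge\max\{q+1,2\}$, $n\ge p-1$ in $\mathbf H$ --- are realizability constraints that call for the two tools announced in the introduction. For the bound on $h$ I would argue by contradiction: if $\height_PI\le c-2$ fails, then $\height_PI=1$ (the value $0$ is impossible since $P$ is a domain and $I\ne0$), so, $P$ being a UFD, $I=fJ$ for a prime element $f$ and a proper ideal $J$ with $\operatorname{grade}_PJ\ge1$; the exact sequences relating $P/fJ$, $P/J$ and the hypersurface $P/(f)$ then force the Massey products on $A_+$ to be as degenerate as possible --- effectively confining $A_+^2$ to a $B$ of the $C\otimes_kD$-shape --- so $R$ lies in $\mathbf S$ or in $\mathbf H$, which excludes $\mathbf T$, $\mathbf B$, $\mathbf G$ and yields $h\le c-2$ for those. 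For the lower bounds on $l$ and $n$ the plan is to bring in the DG $F$-module $F^\ast=\Hom PFP$: its homology computes $\ext iP{\widehat R}P$, which vanishes for $i<c-h$ and is nonzero for $i=c-h$, while the minimal complex $F^\ast\otimes_Pk$ has zero differential and, as a graded $A$-module, is $\Hom kA{\shift^ck}$; comparing a minimal $P$-free resolution of $\ext cP{\widehat R}P$ with this graded module and tracking the position of the $A_+$-annihilated subspaces $W_1\subseteq A_1$ and $W_3\subseteq A_3$ produces the missing inequalities. When $R$ is not Cohen--Macaulay one supplements this with the observation that the Betti numbers $\beta^{\bar P}_i(\widehat R)$ over a complete intersection $\bar P=P/(\boldsymbol f)$, with $\boldsymbol f$ a maximal regular sequence in $I$, cannot be eventually constant because $R$ is not a complete intersection, and the resulting growth estimate forces $\dim_kW_1$ and $\dim_kW_3$ upward.

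I expect the principal obstacle to be two-fold. First, the Golod-type analysis of the Koszul homology algebra of $P/fJ$ that pins down $h$: one must show that having the prime factor $f$ in every minimal generator of $I$ really does collapse the multiplicative structure of $A$ into $\mathbf H$- or $\mathbf S$-shape. Second, the homological bookkeeping behind the lower bounds on $l$ and $n$, since the decomposition $A=B\ltimes W$ is not induced by any filtration of $F$, so the size of the $W$-summand must be recovered from the interplay between the resolution of $\widehat R$, its $P$-dual $F^\ast$, and the growth of Betti numbers over a complete-intersection reduction of $P$ --- the point at which the argument is genuinely new in this context.
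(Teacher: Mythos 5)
Your plan matches the paper's proof in its essentials: the columns for $p,q,r$ are read off the multiplication tables in \ref{table}; the bound $l\ge c-h$ comes from \eqref{eq:codim}; the bound $n\ge 2-h$ comes from the identification $n=\mu^d_R$ and the Gorenstein criterion; the restriction on $h$ (namely that $h=c-1$ forces $R$ to be Golod, hence in $\mathbf S$ or $\mathbf H(0,0)$) comes from Shamash's theorem applied to $I\subseteq(f)$ after factoring out a common prime; the bound $l\ge r+1$ is obtained by analysing the DG $F$-module $\Hom PF{\shift^3 P}$ and its mapping cone; and the bounds relating $l,n$ to $p,q$ come from computing the Betti sequence $\beta^Q_i(\widehat R)$ over a complete-intersection reduction $Q=P/(x,y)$ via a Golod homomorphism $Q\to\widehat R$, and then invoking the fact that Betti sequences over complete intersections are eventually constant or strictly increasing. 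These are exactly the two tools the paper deploys in Lemmas \ref{lem:alt} and \ref{lem:Gr}.

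There is, however, one genuine gap. Your plan would deliver $l\ge r+1=3$ in class $\mathbf B$ and $l\ge r+1\ge3$ in $\mathbf G(r)$, but the theorem asserts $l\ge 4-h$, which when $h=0$ is the strictly stronger claim $l\ge4$. Neither the $\Hom PF{\shift^3P}$ argument nor the Betti-growth argument excludes the combination $l=3$, $h=0$ for $\mathbf B$ or $\mathbf G(r)$; the paper handles this by citing \ref{4rel}, a separate classification of almost complete intersections of codimension $3$ from \cite{Av:aci}, which shows that $l=3$, $h=0$ occurs only in $\mathbf T$, $\mathbf H(3,0)$, or $\mathbf H(3,2)$. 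Without that external input (or some replacement for it) your argument lands on $l\ge\max\{3,r+1\}$ rather than $l\ge\max\{4-h,r+1\}$ for those two classes. A smaller imprecision: you write the height bound as $h\le c-2$ with ``(automatic when $c=2$),'' but for $\mathbf S$ with $c=2$ this would read $h\le 0$, which is false (take $R=P/(wx,wy)$, which has $h=1$); the correct statement for $\mathbf S$ is the automatic $h\le c-1=1$, while $h\le c-2=1$ is the nontrivial claim only when $c=3$.
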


The notation used in the theorem remains in force for the rest of the section.

  \begin{remark}
   \label{tabulated}
The entries in the columns for $c$, $p$, $q$, and $r$ are read off directly 
from the description of the graded algebra $A$ in \ref{table}.  
  \end{remark}
  
Some numerical equalities determine the structure of the ring $R$.

  \begin{corollary}
    \label{cor:class}
Assume $c=3$ and $R$ is not complete intersection.

The following conditions then are equivalent.
\begin{enumerate}[\quad\rm(i)]
  \item
$l=q+1$.
  \item
$l=p$ and $n=q$.
  \item
$R$ is in $\mathbf{H}(p,q)$ with $n=p-1$.
 \item
$\widehat R\cong P/(J+zR)$, where $(P,\fp,k)$ is a regular local ring, $J$ an
ideal of $P$ with $J\subseteq\fp^2$ and $\rank_k(J/\fp J)=l\ge2$, and $z$ a 
$P/J$-regular element in $\fp^2$.
  \end{enumerate}
  \end{corollary}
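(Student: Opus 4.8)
The plan is to establish the chain (iv) $\Rightarrow$ (iii) $\Rightarrow$ (ii) $\Rightarrow$ (i) $\Rightarrow$ (iv), using Theorems \ref{thm:class} and \ref{thm:series} together with the structural input from \ref{table}. The implications among (i), (ii), (iii) should be almost formal once one keeps track of the constraints recorded in Theorem \ref{thm:class}: in class $\mathbf{H}(p,q)$ we always have (reading off \ref{table}) that $q=r$ and $p$ is as tabulated, while the inequalities $l\ge\max\{p,q+1\}$ and $m=l+n$ from \eqref{eq:m} link the remaining invariants. So for instance, if $l=q+1$ then comparing with $l\ge p$ and the classification table will force the ring into class $\mathbf{H}$ (the other four classes have $r\in\{0,2,r\}$ and $l$-bounds that are incompatible with $l=q+1$ once one also uses $q=r$), and then the relation $n=p-1$ should drop out of the numerical identities in that row. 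Conversely (iii) visibly gives $l=p$ via the $\mathbf{H}$-row bound $l\ge\max\{\dots,p\}$ once we know $n=p-1$ sharpens the inequality to an equality, and $n=q$ similarly; and (ii) gives $l=q+1$ since $n=q$ and $l=p$ plug into $m=l+n$ and the tabulated inequalities.

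The substantive implication is (iv) $\Rightarrow$ (iii) (equivalently, (i) $\Rightarrow$ (iv)), i.e.\ recognizing the quotient-by-a-regular-element structure from the numerics. For (iv) $\Rightarrow$ (iii): writing $\widehat R=P/(J+zP)$ with $z$ a $P/J$-regular element of $\fp^2$, I would resolve $\widehat R$ over $P$ by tensoring a minimal resolution $G$ of $P/J$ with the Koszul complex on $z$; since $z\in\fp^2$ this tensor product is again minimal, and the DG-algebra structure on $F=G\otimes_P K(z;P)$ is the tensor product of a DG-algebra structure on $G$ with the exterior algebra on one generator in degree $1$. Passing to $A=F\otimes_P k$ gives $A\cong \HH(G\otimes_P k)\otimes_k \bigwedge_k\shift k$. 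The Hilbert–Burch theorem applied to $P/J$ (which has codepth $2$, since $\rank_k(J/\fp J)=l\ge2$ forces $P/J$ non–complete-intersection, non-regular) shows $\HH(G\otimes_P k)\cong k\ltimes W'$ for a suitable $B_+$-trivial module $W'$, i.e.\ it is $C\otimes_k D$ with $D=k\ltimes\shift k$ absent and the $\shift^2$-part controlled by $W'$. Tensoring with $\bigwedge_k\shift k$ produces exactly the shape $C\otimes_k D$ with $D=k\ltimes\shift k$ that defines class $\mathbf{H}(p,q)$; and a dimension count (using $\rank_k W'=l$, so $\rank_k A_1=l+1$, $\rank_k A_2 = l+1$, $\rank_k A_3=1$) gives $n=1$... but wait, more carefully: $A_3$ acquires contributions both from $\shift^2$-part of $k\ltimes W'$ times $\shift k$ and possibly from a top class, so I would instead directly compute $n=\rank_k A_3$ and $p=\rank_k(A_1^2)$ from the explicit tensor decomposition and check $n=p-1$, placing $R$ in $\mathbf{H}(p,q)$ with the required relation. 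The key point is that multiplication by $z$ being a regular element means $F$ is literally a mapping cone / tensor with $K(z)$, which rigidly pins down $A$.

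For the reverse recognition (i) $\Rightarrow$ (iv): assuming $l=q+1$, the previous paragraph's chain puts $R$ in $\mathbf{H}(p,q)$ with $n=p-1$, and I must produce the presentation. Here I would invoke Theorem \ref{thm:series}: in class $\mathbf{H}(p,q)$ with $n=p-1$ the denominator $g(t)=1-t-lt^2-(n-p)t^3+qt^4 = 1-t-lt^2+t^3+qt^4$, which should factor as $(1+t^3\cdot(\text{something}))\times$... — more usefully, I would use the change-of-rings machinery and the $\mathbf{H}$-structure $A=C\otimes_k D$ with $D=k\ltimes\shift k$ to exhibit a regular element. Concretely, the factor $D=k\ltimes\shift k=\bigwedge_k\shift k$ in $A\cong (k\ltimes W'')\otimes_k\bigwedge_k \shift k$ should be induced by a single element $z$ of $\fp^2$ whose image in $A_1$ is the exterior generator; that $z$ is then $P/J$-regular where $P/J$ is the quotient resolved by the $C\otimes_k(\text{its }W'')$ factor. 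Making this last step rigorous — going from the \emph{algebra} tensor decomposition of $A$ back to a \emph{ring} decomposition $\widehat R=P/(J+zP)$ — is the main obstacle: one needs to lift the exterior generator of $A_1$ to an actual element of $\fp$ and verify the corresponding subcomplex of $F$ is a genuine $P$-free resolution of $P/J$ with $z$ regular on it. I expect this to follow from \cite{AKM}'s explicit models for class $\mathbf{H}$ (which are built precisely as such tensor products) combined with minimality, but pinning down the regularity of $z$ from the vanishing of the relevant Tor (equivalently, from the algebra structure) is where the care is needed.
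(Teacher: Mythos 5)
Your cyclic plan is reasonable and the formal equivalences among (i), (ii), (iii) are more or less the paper's argument, but two genuine gaps remain, one of which you yourself flag.

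First, to pass from $l=q+1$ to $n=p-1$ you write that this "should drop out of the numerical identities in that row." It does not: the $\mathbf{H}(p,q)$ row of Theorem~\ref{thm:class} only records the one-sided inequality $n\ge p-1$, which is not enough. What is actually needed is the dichotomy of Lemma~\ref{lem:alt}: either $l\ge q+2$ and $n\ge p-\tau_R$, or $l=q+1$ and $n=p-1-\tau_R$. Given $l=q+1$, we are forced into the second alternative, which delivers $n=p-1$ (since $\tau_R=0$ in class $\mathbf{H}$). That dichotomy comes from the growth-of-Betti-numbers argument over a complete intersection $Q$ and is not a consequence of the inequalities alone; your proposal never invokes it.

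Second, the implication (ii) or (i) $\Rightarrow$ (iv) is the substantive step and you correctly identify it as the main obstacle, but then leave it unresolved: "pinning down the regularity of $z$ from the vanishing of the relevant Tor...is where the care is needed." The paper's resolution is short but not elementary: from the description of $A$ in class $\mathbf{H}$, one observes that $A$ is free as a graded module over the subalgebra $k[a]/(a^2)$ generated by a nonzero element $a\in 1\otimes_kD_1$, and then cites \cite[3.4]{Av:msri}, which is exactly the lifting theorem you are missing --- it converts this algebraic freeness condition on Koszul homology into the desired presentation $\widehat R\cong P/(J+zP)$ with $z$ a $P/J$-regular element in $\fp^2$. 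Without that external input, the step cannot be closed by the considerations in your proposal. Incidentally, your intermediate dimension count in the (iv) $\Rightarrow$ (iii) direction (asserting $\rank_kA_2=l+1$ and $n=1$) is also off: the Hilbert--Burch resolution of $P/J$ has ranks $1,\,l,\,l-1$, so tensoring with $\bigwedge_k\shift k$ yields $\rank_kA_2=2l-1$ and $n=l-1$; the paper avoids this by computing via the Tor--K\"unneth decomposition and reading off $q=l-1$ directly rather than by a raw dimension count.
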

  
When $l$ or $n$ is small the theorem is complemented by more precise results.

  \begin{subsec}
     \label{smalll}
Let $R$ be a local ring with $c=3$.

  \begin{subsubsec}
     \label{3rel}
If $l=2$, then by \cite[Proof of 7.2]{Av:small} one of the following cases occurs: 
  \begin{enumerate}[\quad\rm(a)]
  \item
$h=0$ and $R$ is in $\mathbf{C}(3)$.
  \item
$h=1$ and $R$ is in $\mathbf{H}(2,1)$ with $n=1$, or in $\mathbf{H}(0,0)$ or 
$\mathbf{H}(1,0)$ with $n\ge1$, or in $\mathbf{H}(2,0)$ or $\mathbf{T}$ with $n\ge2$.
  \item
$h=2$ and $R$ is in $\mathbf{H}(0,0)$ with $n\ge1$.
  \end{enumerate}
  \end{subsubsec}

  \begin{subsubsec}
     \label{4rel}
If $l=3$ and $h=0$, then by \cite[Proof of Thm.\,2]{Av:aci} $R$ is in one of the classes:  
  \begin{enumerate}[\quad\rm(a)]
  \item
$\mathbf{H}(3,2)$ with $n=2$.
  \item
$\mathbf{T}$ with odd $n\ge3$.
  \item
$\mathbf{H}(3,0)$ with even $n\ge4$.
  \end{enumerate}
  \end{subsubsec}

  \begin{subsubsec}
     \label{brown}
If $l\ge4$, $h=0$, $n=2$, and $p>0$, then by \cite[4.5]{Br} $R$ is in one of the classes:
  \begin{enumerate}[\quad\rm(a)]
  \item
$\mathbf{B}$ with even $l$.
  \item
$\mathbf{H}(1,2)$ with odd $l$.
   \end{enumerate}
  \end{subsubsec}
    \end{subsec}

We start the proof of the theorem with some general considerations.

  \begin{lemma}
    \label{lem:class}
Write $\widehat R$ in the form $P/I$, with $P$ regular and $\dim P=e$;
see \emph{\ref{cst}}.  

When $R$ is not complete intersection the following assertions hold.
  \begin{enumerate}[\quad\rm(1)]
  \item
$l\ge c-h=e-\dim R\ge1$ hold. 
  \item
$l=1$ implies $c=2$ and $h=1$; furthermore, $I=(wx,wy)$ where $w$ is an 
$\widehat R$-regular element and $x,y$ is an $\widehat R$-regular sequence.
 \item
$h\le2$ holds; furthermore, $h=2$ implies that $R$ is in $\mathbf{H}(0,0)$.  
  \item
If $R$ is not Gorenstein and $c=3$, then $n\ge 2-h$. 
  \end{enumerate}
  \end{lemma}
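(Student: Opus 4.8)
The plan is to work with the minimal free resolution $F$ of $\widehat R$ over $P$ from \eqref{eq:ab}, using the ranks $\rank_P F_i = \rank_k A_i$ recorded in \ref{cst}, and to exploit the classification of $A$ in \ref{table} together with the homological characterizations of Gorenstein and Golod rings in \ref{Gorl} and \ref{golod}. Throughout, $R$ not complete intersection means, by \ref{CI}, that $A$ is \emph{not} $\bigwedge_k\shift k^c$, equivalently (by the last sentence of \ref{cst}) that the first inequality in \eqref{eq:codim} is strict.

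For part (1), I would simply repeat \eqref{eq:codim}: since $P$ is Cohen–Macaulay, $\height_P I = e - \dim R = c - h$, and strictness of the leftmost inequality in \eqref{eq:codim} when $R$ is not complete intersection gives $l + 1 > c - h$, i.e. $l \ge c - h$; the final inequality $c - h \ge 0$ is already in \eqref{eq:codim}. For part (2), suppose $l = 1$. Then (1) forces $c - h \le 1$; combined with $c \ge 1$ (as $c - h \ge 0$ with $h = \dim R - d$ and $R$ not regular when... actually $c\ge1$ is immediate since $c=0$ would make $R$ regular hence complete intersection), and with the structure table: $A_1 \cong k^{l+1} = k^2$. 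If $c = 1$ then $A = \bigwedge_k \shift k$, which is an exterior algebra, so $R$ is complete intersection — excluded. Hence $c = 2$, and then $h = \dim R - d$ with $c - h \le 1$ gives $h \ge 1$; since $A$ has $A_2 \ne 0$ (else $\operatorname{pd}_P \widehat R = 1 < c$), and the $c=2$ classes in \ref{table} are $\mathbf{C}(2)$ and $\mathbf{S}$, non-complete-intersection forces $\mathbf{S}$, i.e. $A = k \ltimes W$. Now $h \le 1$ by part (3) (proved below, independently), so $h = 1$. The explicit form $I = (wx, wy)$: here $F$ is given by the Hilbert–Burch theorem with $F_1 = P^2$, $F_2 = P$, so $I$ is generated by the $2\times 2$ minors times a common factor; writing $\dd_2 = \begin{pmatrix} \pm y \\ \mp x\end{pmatrix}$ up to units and $\dd_1 = (wx, wy)$ after extracting the Hilbert–Burch factor $w$, regularity of $w$ and of $x,y$ modulo suitable ideals follows from $\height_P I = 1$ (so the $1\times1$ minor $w$ is a nonzerodivisor giving a grade-$1$ ideal) and the fact that the Hilbert–Burch complex $0 \to P \to P^2 \to P$ twisted by $w$ resolves $P/(wx,wy)$, forcing $x,y$ to be a regular sequence in the localization. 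This is the most delicate bookkeeping in the lemma.

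For part (3), I would argue by the growth of Betti numbers of $k$. If $h = \dim R - d$ is large, then $\codim R = e - \dim R = c - h$ is small; but when $R$ is not complete intersection the Poincaré series $\Po Rk$ is not $(1+t)^c/(1-t^2)^{\,\dots}$, and more sharply the denominator $g(t)$ computed in Theorem \ref{thm:series} has a root in $(0,1)$ whose reciprocal governs exponential growth unless $g(t) = 1 - t - l t^2 - \dots$ degenerates. The cleanest route: by \eqref{eq:codim}, $l \ge c - h$, and by part (1) together with $c \le 3$, if $h = 3$ then $c = 3$ and $l \ge 0$ but the table entries force $c - h = 0$, i.e. $\widehat R$ is artinian-over-regular of codimension $0$... rather, $\dim R = e$ would mean $\widehat R$ is regular — contradiction. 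So $h \le 2$. If $h = 2$, then $c - h \le 1$, so either $c = 2$ (then $l \ge 0$ but we need $l \ge 1$ by (1), consistent) or $c = 3$ with $c - h = 1$. In the $c = 3$, $h = 2$ case, $\codim R = 1$, so $I$ has height $1$; since $R$ is not complete intersection, $I$ is not principal, and the structure of codepth-$3$ algebras over a ring with one-dimensional defining ideal forces, via the Golod-type collapse (an ideal of height $1$ in a regular ring, after dividing by its content, becomes a height-$1$ perfect ideal — handled by Hilbert–Burch on a hypersurface section), that $A_+^2 = 0$, i.e. $R$ is Golod; by \ref{golod} this puts $R$ in $\mathbf{S}$ (excluded, $c = 2$) or $\mathbf{H}(0,0)$. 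The $c = 2$, $h = 2$ subcase is excluded because then $\codim R = 0$ forces $R$ regular. I expect the identification of the $\mathbf{H}(0,0)$ case — i.e. showing $A_+^2 = 0$ whenever $\codim R \le c - 2$ — to be the main obstacle in part (3); it should follow from a content/Hilbert–Burch reduction or from a direct count using \eqref{eq:m} and the multiplication table, but the argument must be made carefully.

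For part (4), assume $R$ is not Gorenstein and $c = 3$. By (3), $h \in \{0, 1, 2\}$, so $2 - h \le 2$. If $h = 2$, then $R$ is in $\mathbf{H}(0,0)$ by (3), and $n \ge 0 = 2 - h$ is automatic (indeed $n \ge 1$ since $R$ is not Gorenstein forces $A_3 \ne 0$... actually one must check $n \ge 0$ suffices here). The content is the case $h \le 1$, where I must show $n \ge 1$ when $h = 1$ and $n \ge 2$ when $h = 0$. Here I would use that $R$ not Gorenstein means, by \ref{Gorl}, that $A$ does not have Poincaré duality; in the table, this rules out $\mathbf{G}(r)$ with $n = 1$ — but the other classes can a priori have small $n$. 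The decisive tool is the Bass-series expression from Theorem \ref{thm:series}: for each class the numerator $f(t)$ has constant term $n$ (in classes $\mathbf{T}, \mathbf{B}, \mathbf{G}, \mathbf{H}$) or $l$ (class $\mathbf{S}$, but $c = 2$ there), so $n = \mu^d_R$-related... more precisely $\ba R = t^d f(t)/g(t)$ shows $\mu^d_R$ equals the constant term of $f$, which is $n$; but $\mu^d_R \ge 1$ always, and when $R$ is not Cohen–Macaulay (i.e. $h \ge 1$) one has the sharper bound $\mu^d_R \ge$ something forcing $n \ge 2 - h$. Concretely: $\mu^d_R = \type R \cdot(\text{correction})$, and over a non-Cohen–Macaulay ring the type is constrained; alternatively, expand $g(t)$ from the table and use that $g$ has nonnegative... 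The honest approach is: since $c - h \le l$ and for $h = 0$ we have $c = \codim R$, so $\widehat R$ is a perfect (Cohen–Macaulay) quotient of $P$, whence $n = \rank_k A_3 = \type \widehat R \ge 2$ because $\widehat R$ is not Gorenstein (type $\ge 2$ for a non-Gorenstein Cohen–Macaulay ring) — this settles $h = 0$. For $h = 1$: $\codim R = 2$, so cutting $\widehat R = P/I$ by a generic element $\overline z \in \fp^2$... rather, passing to $\widehat R/(z)$ for $z$ a parameter on a Cohen–Macaulay... one reduces to the Cohen–Macaulay case of codimension $3$, where type $\ge 2$; tracking how $n$ changes under this deformation (it does not increase, or the relation $m = l + n$ from \eqref{eq:m} is preserved) gives $n \ge 1$. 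I expect part (4), specifically the $h = 1$ reduction, to be the main obstacle of the whole lemma, because it requires relating $n$ for $R$ to invariants of a Cohen–Macaulay specialization and invoking that non-Gorenstein Cohen–Macaulay rings have type at least $2$.
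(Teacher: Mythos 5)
Your overall plan (cite \eqref{eq:codim}, use the classification, invoke Golodness) points in the right direction, and for part (1) and the $h=0$ case of part (4) you essentially have the paper's argument, but there are genuine gaps elsewhere. In (1) you only obtain $c-h\ge0$, not the asserted $c-h\ge1$; the missing step is that $R$ not complete intersection rules out $R$ regular, which makes the last inequality in \eqref{eq:codim} strict. In (2) you rule out $c=1$ but not $c=3$ before writing ``Hence $c=2$'', and the subsequent appeal to Hilbert--Burch already presupposes $c=2$, so the argument is circular. The paper avoids this by working directly with factoriality of $P$: write $I=(u,v)$ with $u=wx$, $v=wy$, and $x,y$ relatively prime, observe that $x,y$ is then a $P$-regular sequence, so $P/w(x,y)$ is resolved in length two; minimality (using that $w$ is a nonzero nonunit, since $R$ is not complete intersection) gives $c=\operatorname{pd}_P\widehat R=2$ and produces the presentation at the same time.

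In (3) the key assertion --- that $c-h=1$ forces $R$ Golod --- is stated but not proved. Your sketch (``after dividing by its content, becomes a height-1 perfect ideal --- handled by Hilbert--Burch'') does not amount to an argument; the fact actually needed is the theorem of Shamash (cited in the paper as \cite[5.2.5]{Av:barca}) that $P/wJ$ is Golod whenever $w$ is a nonzero nonunit and $\operatorname{grade}J\ge2$, and you correctly flag this as the main obstacle without supplying it. In (4) you miss the observation that makes the claim short: by a standard property of Koszul homology, $A_3=\HH_c(K)\cong\ext dRkR$, so $n=\mu^d_R$. Hence $n\ge1$ always (so $n\ge2-h$ is automatic for $h\ge1$), and for $h=0$ the non-Gorenstein hypothesis forces $\mu^d_R\ge2$. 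There is no need for your deformation-to-Cohen--Macaulay reduction, which you yourself leave incomplete, and the uncertainty you express about whether $n\ge1$ holds should not arise: it already follows from \eqref{eq:ab}, since $c=3$ gives $A_3\ne0$.
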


  \begin{proof}
(1) Formula \eqref{eq:codim} gives $l+1>c-h=e-\dim R\ge1$.

(2)  When $l=1$ the ideal $I$ is minimally generated by two elements, 
see \eqref{eq:codim}; say, $I=(u,v)$.  As the regular local ring $P$ is 
factorial, we have $u=wx$ and $v=wy$ with relatively prime $x,y$.  The sequence 
$x,y$ is regular, so $\operatorname{pd}_P\widehat R=2$.  As $\widehat R$ is not 
complete intersection, the element $w$ is non-zero and not invertible, so $h=1$.  

(3)  {From} (1) we see that $h\le c-1\le2$ holds, and equality implies $e-\dim R=1$.
The ring $R$ then is Golod by \cite[5.2.5]{Av:barca}, and so it is in $\mathbf{H}(0,0)$ 
by \ref{golod}.

(4) For any maximal $R$-regular sequence $\bx$ standard results, see \cite[1.6.16]{BH}, give
  \[
A_3\cong((\bx):\fm)/(\bx)\cong
\Hom{R/(\bx)}k{R/(\bx)}\cong \ext dRkR\ne0\,,
  \]
so $n=\mu^d_R\ge1$.  Now recall that $R$ is Gorenstein if and only if $h=0$ and 
$\mu^d_R=1$.
  \end{proof}

It is proved in \cite{AKM} that for several classes of local rings $R$,
including those of embedding codepth at most $3$, there is a 
complete intersection ring $Q$ and a Golod homomorphism 
$Q\to R$.  This was used to show that for every finite module $M$ 
over such a ring, $\Po RM$ represents a rational function 
with fixed denominator.

In the proof of the next lemma we turn the tables:  By applying the formulas for 
$\Po Qk$ and $\Po Rk$ from Theorem \ref{thm:series} we express the  
Betti numbers $\beta^Q_i(R)$ in terms of the numerical invariants of $R$, 
defined in \ref{invariants}, then use information on the asymptotic behavior
of Betti numbers over complete intersections, obtained in \cite{AGP}.

  \begin{lemma}
    \label{lem:alt}
Set $\tau_R=1$ for $R$ in $\mathbf{T}$ and $\tau_R=0$ otherwise.

If $c=3$ and $R$ is not complete intersection, then the following dichotomy holds:
  \begin{enumerate}[\rm\quad(a)]
 \item
$l\ge q+2$ and  $n\ge p-\tau_R$, or
 \item
$l= q+1$ and $n= p-1-\tau_R$.
  \end{enumerate}
  \end{lemma}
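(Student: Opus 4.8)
The plan is to follow the recipe sketched in the paragraph preceding the lemma: produce a complete intersection ring $Q$ with a Golod homomorphism $Q\to\widehat R$, extract the Betti numbers $\beta^Q_i(\widehat R)$ from the quotient of Poincar\'e series, and then invoke the asymptotic rigidity of Betti sequences over complete intersections. Concretely, since $c=3$ and $R$ is not complete intersection, by \cite{AKM} (as recalled before the lemma) there is a complete intersection $Q$ of codimension equal to $\height_P(I)$ and a Golod homomorphism $\varphi\col Q\to\widehat R$; one may take $Q=P/(\bx)$ for a maximal $\widehat R$-regular sequence $\bx$ contained in $I$, so that $Q$ is complete intersection of codimension $c-h$ and $\edim Q-\depth Q=c-h$. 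For a Golod homomorphism one has the standard identity
  \[
\Po {\widehat R}k=\frac{\Po Qk}{1-t\big(\Po {\widehat R}k/\Po Qk-1\big)}
  \quad\Longleftrightarrow\quad
\frac{1}{\Po {\widehat R}k}=\frac{1}{\Po Qk}-t\cdot\frac{P^{\widehat R}_Q(t)-1}{\text{(something)}},
  \]
but the cleaner route is: a Golod homomorphism gives $P^{\widehat R}_Q(t)\,\cdot\,$(a correction) and in particular the generating function $\sum_i\beta^Q_i(\widehat R)t^i$ is a rational function that is \emph{computable} from $\Po Qk$ and $\Po {\widehat R}k$ alone. So I would first write down $\Po Qk=(1+t)^{e}/\big((1-t)^{c-h}(1+t)^{c-h-1}\big)$ (Tate's formula for the complete intersection $Q$, or Theorem \ref{thm:series} applied to class $\mathbf C(c-h)$ after the shift $(1+t)^e$ versus $(1+t)^{e-1}$) and $\Po {\widehat R}k=(1+t)^{e-1}/g(t)$ from Theorem \ref{thm:series}, with $g(t)$ read off the table according to the class of $R$.

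Next I would compute $P^{\widehat R}_Q(t)=\sum_{i\ges0}\beta^Q_i(\widehat R)\,t^i$ explicitly. Over the complete intersection $Q$, the change-of-rings spectral sequence (or the Golod property of $\varphi$, which gives the precise formula $\Po {\widehat R}k^{-1}=\Po Qk^{-1}-t\,(P^{\widehat R}_Q(t)-1)$ up to the usual normalization) yields
  \[
P^{\widehat R}_Q(t)=1+\frac{1}{t}\left(\frac{\Po Qk}{\Po {\widehat R}k}-1\right)
=1+\frac{1}{t}\left(\frac{(1+t)\,g(t)}{(1-t)^{c-h}(1+t)^{c-h-1}}-1\right).
  \]
Plugging in each of the five polynomials $g(t)$ from Theorem \ref{thm:series} and each admissible value $h\in\{0,1,2\}$ (constrained by Lemma \ref{lem:class}(3)), this is a short rational-function simplification producing, for each class, an explicit rational function whose power-series coefficients $\beta^Q_i(\widehat R)$ I can read off. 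The point of the computation is to expose the first few coefficients $\beta^Q_0,\beta^Q_1,\beta^Q_2,\dots$ and, crucially, the \emph{sign pattern} and eventual behavior of the sequence: the denominator will be (a factor of) $(1-t)^{c-h}$ times a polynomial, so the Betti numbers either are eventually polynomial in $i$ (complexity $<c-h$) or grow with a pole strictly inside the unit disc.

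Then comes the arithmetic heart: by \cite{AGP} the Betti sequence of any finite module over a complete intersection $Q$ is eventually given by two polynomials (one on even, one on odd indices) of the same degree, equal to $\mathrm{cx}_Q(\widehat R)-1$, and in particular $\big(\beta^Q_i(\widehat R)\big)$ is eventually non-decreasing, and if it is bounded then it is eventually constant with $\mathrm{cx}_Q(\widehat R)\le1$. Comparing this structural constraint with the explicit rational function just computed forces the relations between $l,n,p,q,r,h$ in the stated dichotomy. Concretely, the leading terms of $P^{\widehat R}_Q(t)$ will involve the combination $l-q$ (and, through the degree-$3$ coefficient, $n-p$), and the requirement that the coefficient sequence be eventually monotone eliminates the possibilities $l\le q$ and, when $l=q+1$, pins $n$ to the value $p-1-\tau_R$; the shift by $\tau_R$ appears precisely because in class $\mathbf T$ the polynomial $g(t)$ has the extra $-t^5$ term (equivalently, $B=C\ltimes\shift(C/C_{\ges2})$ rather than $C\ltimes\shift C_+$), which perturbs the degree-$3$ coefficient of $P^{\widehat R}_Q(t)$ by one. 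I would organize this as a short case analysis over the five rows of the table in Theorem \ref{thm:series}, in each case exhibiting $P^{\widehat R}_Q(t)$ and then citing \cite{AGP} to rule out (a)$^{c}$ and (b)$^{c}$.

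The main obstacle I anticipate is not any single hard step but the bookkeeping: one must choose $Q$ correctly (its codimension is $c-h$, not $c$, so the relevant ``regular-like'' part of $\Po Qk$ has the form $(1+t)^e/((1-t)^{c-h}(1+t)^{c-h-1})$, and getting the exponents right across the cases $h=0,1,2$ is where errors creep in), and one must be careful that $P^{\widehat R}_Q(t)$ genuinely has \emph{nonnegative} integer coefficients — this nonnegativity, together with eventual monotonicity from \cite{AGP}, is exactly what yields the inequalities. A secondary subtlety is justifying the Golod-homomorphism formula for $P^{\widehat R}_Q(t)$ in the borderline low cases (e.g. when $\widehat R$ is Golod over $Q$ trivially, or when $c-h$ is as small as $1$), but Lemma \ref{lem:class} already restricts $h$ enough that only finitely many $(c,h)$ pairs arise and each can be checked directly.
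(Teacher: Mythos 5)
Your plan is the paper's plan: produce a Golod homomorphism $Q\to\widehat R$ from a complete intersection $Q$, extract $\beta^Q_i(\widehat R)$ from the quotient of Poincar\'e series, and invoke the rigidity (eventually constant or eventually strictly increasing) of Betti sequences over complete intersections from \cite{AGP}. But the implementation has a conceptual gap in the choice of $Q$, and a missing preliminary case, both of which matter.

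The paper does \emph{not} take $Q$ of codimension $c-h$; it always takes $Q=P/(x,y)$ of \emph{codimension two}, irrespective of $h$. This is not merely cosmetic. The Golod property of $Q\to\widehat R$ is what makes the whole computation possible, and the argument the paper extracts from \cite{AKM} is intrinsically a two-step codimension-$2$ construction: for the relevant classes one first finds a single element $x\in I$ with $P/xP\to R$ Golod, and then lifts to $P/(x,y)\to R$ via \cite[5.13]{AKM}, choosing $y\in I$ whose image in $P/xP$ is regular. For $R$ in $\mathbf T$ the codimension-$2$ $Q$ is found directly in \cite[6.1]{AKM}. There is no general statement that $P/(\bx)\to R$ is Golod when $\bx$ is a maximal $P$-regular sequence in $I$; in particular when $h=0$ (so $\height_PI=3$) your proposed $Q$ would have codimension $3$, and the Golod property of $Q\to\widehat R$ would need a new argument you have not supplied. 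Fixing $\codim Q=2$ also makes the rational function collapse cleanly: with $\Po Qk=(1+t)^{e-2}/(1-t)^2$ and the uniform $\Po Rk=(1+t)^{e-1}/(1-t-lt^2-(n-p)t^3+qt^4-\tau_Rt^5)$, the $e$-dependence cancels and $(1-t)\Po Q{\widehat R}$ becomes an explicit power series with \emph{constant} odd-step and even-step increments $l-1-q$ and $n+1-p+\tau_R$ (for $i\ge1$), which is precisely what feeds into \cite[8.1]{AGP}.

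Second, the codimension-$2$ Golod construction in the paper is applied only under the additional hypotheses $h\le1$ and $p+q\ge1$; the case $R$ Golod (class $\mathbf H(0,0)$, which includes all of $h=2$) is dispatched first by directly observing that the first alternative holds and the second fails. Your sketch does not isolate this case, and would have to, since $p=q=0$ rules out the mechanism used to produce $y$.

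Finally, minor arithmetic: the Golod identity should read $\Po Q{\widehat R}=\tfrac1t\bigl(1+t-\Po Qk/\Po Rk\bigr)$, so your displayed formula $1+\tfrac1t(\Po Qk/\Po Rk-1)$ has the sign reversed; and Tate for $Q$ with $\edim Q=e$ and $\codim Q=c'$ gives $\Po Qk=(1+t)^{e-1}/\bigl((1-t)^{c'}(1+t)^{c'-1}\bigr)$, one power of $(1+t)$ less than what you wrote. You anticipated that these are the places errors creep in, and indeed they did.
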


  \begin{proof}
Parts (1) and (2) of Lemma \ref{lem:class} imply $l\ge2$ and $h\le2$.  
If $h=2$, then Lemma \ref{lem:class}(3) shows that $R$ is in 
$\mathbf{H}(0,0)$.  When $R$ is in $\mathbf{H}(0,0)$ the first pair
holds and the second fails. Until the end of the proof we  assume 
$h\le1$ and $p+q\ge1$.  

We choose an isomorphism $\widehat R\cong P/I$, as in \ref{cst}.  
By taking a close look at some arguments in \cite{AKM}, we set out 
to show next that $I$ contains a regular sequence $x,y$, such that for 
$Q=P/(x,y)$ the induced map $Q\to \widehat R$ is a Golod homomorphism. 

For $R$ in $\mathbf{T}$ such a sequence is found in the proof of \cite[6.1]{AKM}.  
It is also shown there that if $R$ is in $\mathbf{G}(r)$, $\mathbf{B}$, or $\mathbf{H}(p,q)$ 
with $p+q\ge1$, then for some $x\in I$ and $\ov P=P/xP$ the map $\ov P\to R$ is Golod.  
As the ideal $\ov I=I/xP$ of $\ov P$ has positive height, we can choose a minimal 
generator $y$ of $I$ so that its image in $\ov R$ is regular.  The natural map from 
$Q=\ov P/y\ov P$ to $R$ is Golod by \cite[5.13]{AKM}.  
By the definition of Golod homomorphisms, see Levin \cite{Le:Gol}, the following
equality then holds:
  \begin{equation}
    \label{eq:golodhom}
\Po Q{\widehat R}=\frac1t\cdot\left(1+t-{\Po Qk}\cdot\frac1{\Po Rk}\right)\,.
  \end{equation}

Inspecting the tabulated values of $p$ and $q$, see Remark \ref{tabulated}, we note that 
the various forms of $\Po Rk$ listed in Theorem \ref{thm:series} admit an
uniform expression, namely,
  \begin{equation}
    \label{eq:tau}
\Po Rk=\frac{(1+t)^{e-1}}{1-t-lt^2-(n-p)t^3+qt^4-\tau_R t^5}\,.
  \end{equation}
Now $Q$ is in $\mathbf{C}(2)$, so $\Po Qk$ is given by Theorem \ref{thm:series}, hence 
\eqref{eq:tau} and \eqref{eq:golodhom} yield
  \begin{align*}
(1-t)\cdot{\Po Q{\widehat R}}
&=\frac{1-t}t\bigg(1+t-\frac{(1+t)^{e-2}}{(1-t)^2}\cdot\frac{1-t-lt^2-(n-p)t^3+qt^4-\tau_R t^5}{(1+t)^{e-1}}\bigg)
  \\
&=\frac{1-t}{t(1-t)^2(1+t)}\big((1-t^2)^2-(1-t-lt^2-(n-p)t^3+qt^4-\tau_R t^5)\big)
  \\
&=\frac1{t(1-t^2)}\big(t+(l-2)t^2+(n-p)t^3-(q-1)t^4+\tau_R t^5)\big)
  \\
&=\frac{1+(n-p)t^2+\tau_R t^4}{1-t^2}+\frac{(l-2)t-(q-1)t^3}{1-t^2}
  \\
&=1+(l-2)t+(n+1-p)t^{2}
  \\
&\phantom{{}={}1+(l-2)t}+\sum_{i=1}^\infty(l-1-q)t^{2i+1}+\sum_{i=1}^\infty(n+1-p+\tau_R)t^{2i+2}\,.
  \end{align*}

The composite equality of formal power produces numerical equalities
  \begin{equation}
    \label{eq:cigrowth}
  \begin{alignedat}{2}
l-1-q&=\beta^Q_{2i+1}(\widehat R)-\beta^Q_{2i}(\widehat R)
&\quad&\text{for all}\quad i\ge1\,,
  \\
n+1-p+\tau_R&=\beta^Q_{2i+2}(\widehat R)-\beta^Q_{2i+1}(\widehat R)
&\quad&\text{for all}\quad i\ge1\,.
  \end{alignedat}
   \end{equation}

The ring $Q$ being complete intersection, the sequence of Betti numbers of
each finite $Q$-module is eventually either strictly increasing or constant, 
see \cite[8.1]{AGP} or \cite[9.2.1(5)]{Av:barca}.  Thus, the left-hand sides of
the equalities in \eqref{eq:cigrowth} are either both positive or both equal to 
zero.  This is just a rewording of the desired conclusion.
   \end{proof}
   
The proof of the next result, with its use of a DG module structure on a minimal 
$P$-free resolution of a dualizing complex for $\widehat R$, presents independent interest.

  \begin{lemma}
    \label{lem:Gr}
If $c=3$ and $R$ is not Gorenstein, then $l\ge r+1$ holds.
  \end{lemma}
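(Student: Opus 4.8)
The plan is to exploit the DG algebra structure on the minimal free resolution $F$ of $\widehat R$ over $P$, together with its \emph{dual} DG module $F^{\vee}=\Hom PFP$, which is (a shift of) a minimal $P$-free resolution of the canonical module $\omega_{\widehat R}$ when $R$ is Cohen--Macaulay, and more generally computes the local cohomology / dualizing complex of $\widehat R$. Reducing modulo $\fp$, the DG algebra $F\otimes_P k$ is quasi-isomorphic to $A=\HH(K)$ (see \eqref{eq:DGchain}), and $F^{\vee}\otimes_P k$ becomes a DG $A$-module which, up to a shift, is $\Hom k A k=A^{*}$. The point is that the $A$-module structure on $A^{*}$ records the \emph{socle pairing}: if $R$ is in $\mathbf{G}(r)$, then by \ref{table} the relevant quotient $C=k\ltimes\shift k^{r}$, so the multiplication $A_{1}\otimes_{k}A_{1}\to A_{2}$ has rank exactly $r$ after passing to the appropriate subquotient; I want to leverage this against a lower bound for $l=\rank_k A_{1}-1$.

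Concretely, I would argue as follows. First, since $R$ is not Gorenstein and $c=3$, Lemma~\ref{lem:class}(4) gives $n\ge 2-h$ and in any case $n\ge 1$. By \ref{table}, for $R$ in $\mathbf{G}(r)$ we have $B=C\ltimes\Hom k C{\shift^{3}k}$ with $C=k\ltimes\shift k^{r}$, and $A=B\ltimes W$ with $(B_{+})W=0$; unwinding this, $\rank_k A_{1}=r+\rank_k W_{1}$ and the product $A_{1}\cdot A_{1}\subseteq A_{2}$ has rank $p=0$ from the table — wait, that is the entry in the non-Gorenstein row; in the Gorenstein case $p$ can be nonzero. The cleaner route is to use the dual module directly: choose a maximal $P$-regular sequence inside $I$, pass to $\ov P=P/(\text{that sequence})$, so $\widehat R=\ov P/\ov I$ with $\operatorname{pd}_{\ov P}\widehat R=c-h$ and $\widehat R$ is $\ov P$-perfect iff it is Cohen--Macaulay. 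When $h=0$, $F^{\vee}$ is a minimal resolution of $\omega_{\widehat R}=\ext 3{\ov P}{\widehat R}{\ov P}$, and the numbers $\rank_P F^{\vee}_i=\rank_P F_{3-i}$ give $\beta^{\ov P}_i(\omega)=(n,m,l+1,1)$. The DG $F$-module structure on $F^{\vee}$ then yields a map of complexes realizing multiplication by $A_{1}$; minimality forces the entries of these maps into $\fp$, and counting the rank of the induced map $A_{1}\otimes \omega_{*}\to\omega_{*}$ in the bottom degree shows this rank is $\ge l+1-(\text{something})$, while the $\mathbf{G}(r)$ structure caps it at $r$. This should give $l+1>r$, i.e. $l\ge r+1$. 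For $h=1$ one replaces $\omega$ by the appropriate shift of $\ext{c-h}{\ov P}{\widehat R}{\ov P}$ and the dualizing-complex bookkeeping; here $h\le 2$ by Lemma~\ref{lem:class}(3), and $h=2$ is excluded since then $R\in\mathbf{H}(0,0)$, not $\mathbf{G}(r)$, so only $h\in\{0,1\}$ occur.

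An alternative, possibly shorter, argument: combine Lemma~\ref{lem:alt} with the numerics of $\mathbf{G}(r)$. From the table in Theorem~\ref{thm:class}, for $\mathbf{G}(r)$ we have $p=0$ and $q=1$, so the dichotomy in Lemma~\ref{lem:alt} reads either (a) $l\ge 3$ and $n\ge -\tau_R=0$, which is automatic, or (b) $l=2$ and $n=p-1-\tau_R=-1<0$, which is impossible since $n\ge 1$. So Lemma~\ref{lem:alt} forces alternative (a) but only yields $l\ge 3=q+2$, not the sharper $l\ge r+1$ when $r\ge 3$. Hence Lemma~\ref{lem:alt} alone is insufficient, and one genuinely needs the structural input from the Gorenstein pairing on $C$ encoded in $\Hom k C{\shift^{3}k}$: the rank-$r$ perfect pairing $C_{1}\times C_{2}\to C_{3}$ sits inside $A$, and the $A$-module $A^{*}$ (equivalently, the DG $F$-module $F^{\vee}$) lets one read off that $A_{1}$ acts on the top of $A^{*}$ with rank at least $l+1-n$ on one hand and at most $r$ on the other.

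The main obstacle I anticipate is the case $h=1$ (and keeping careful track of shifts): there $\widehat R$ is not perfect over $\ov P$, so $F^{\vee}$ is not literally a resolution of a module, and one must work with the dualizing complex $\mathrm{R}\Hom_{\ov P}(\widehat R,\ov P[\,\cdot\,])$, verify that it still carries a DG $F$-module structure, and correctly identify its reduction mod $\fp$ as $\Hom k A k$ up to the right shift so that the degree bookkeeping that produces $l\ge r+1$ goes through. The perfect case $h=0$ should be essentially a direct rank count; the homological-algebra subtleties are all in extending it via the dualizing complex, which is exactly the ``independent interest'' flagged in the sentence preceding the lemma.
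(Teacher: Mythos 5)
Your core idea for the class $\mathbf{G}(r)$ — reduce the DG $F$-module $\Hom_P(F,\shift^3 P)$ modulo $\fp$ to obtain $\Hom_k(A,\shift^3 k)$, and play the $B$-module structure coming from the Poincar\'e duality pairing on $C$ against minimality — is exactly the paper's strategy, and the paper likewise handles $\mathbf{T}$, $\mathbf{B}$, and $\mathbf{H}(p,q)$ by the quick appeal to Lemma~\ref{lem:alt} that you sketch. But for $\mathbf{G}(r)$ your proposed rank count does not close the argument, and as stated the inequality even points the wrong way: if the action of $A_1$ on the relevant graded piece has rank \emph{at least} $l+1-n$ and \emph{at most} $r$, you conclude $l+1-n\le r$, i.e.\ $l\le r+n-1$, which is an \emph{upper} bound for $l$ rather than the lower bound $l\ge r+1$ the lemma asserts.

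What the paper actually does is more delicate than a single rank comparison. It fixes $\beta\in\Hom_k(B,\shift^3k)_0$ dual to the socle, builds a morphism $\phi\col F\to F'=\Hom_P(F,\shift^3P)$ of DG $F$-modules with $(\phi\otimes_Pk)|_B$ the PD isomorphism $B\cong B\beta$ and $(\phi\otimes_Pk)|_W=0$, and takes the mapping cone $Y$. The point is that the identity component $B\xrightarrow{\sim}B\beta$ \emph{cancels} in the minimalization $X$ of $Y$, leaving a complex
\[
0\to P^{n-1}\to P^{l+n-r}\to P^{2(l+1-r)}\to P^{l+n-r}\to P^{n-1}\to 0
\]
with $\HH_i(X)=0$ for $i\ge2$ (the vanishing uses $h\le1$). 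From here one rules out both $r=l+1$ and $r=l$. If $r=l+1$ then $X_2=0$ forces $\dd_4$ to be an isomorphism; minimality gives $n=1$, $W=0$, and $R$ Gorenstein, contradiction. If $r=l$ then $\dd_2(X_2)$ is a torsion-free $P$-module with two generators admitting a length-$3$ minimal resolution; but a two-generated torsion-free module over a regular local ring has projective dimension at most~$1$, so $n=1$. Since $W_1\ne0$, $R$ is not Gorenstein, so $h=1$, and the Foxby--Roberts bound $\mu^{d+1}_R\ge2$ contradicts $\mu^{d+1}_R=l+n-r=1$ (from the exact sequence of \cite{AG}). Neither the torsion-freeness argument pinning down $n=1$ nor the Foxby--Roberts input appears in your sketch, and both are essential; the purely numerical rank comparison you envision cannot separate $l=r$ from $l=r+1$. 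So the strategy is right, but the argument needs the mapping-cone construction, the minimalization, and the two external inputs (two-generated torsion-free modules have $\operatorname{pd}\le1$; Foxby--Roberts) to go through.
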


  \begin{proof}
There is nothing to prove for $R$ in $\mathbf{T}$, as then $r=0$; see Remark \ref{tabulated}.

By the same remark, rings in $\mathbf{B}$ have $p=q=1$ and $r=2$.  Case (b) in Lemma 
\ref{lem:alt} then cannot hold, as it implies $n=0$, and case (a) gives $l\ge3=r+1$.   

Rings $R$ in $\mathbf{H}(p,q)$ have $r=q$, see Remark \ref{tabulated}, 
and Lemma \ref{lem:alt} gives $l\ge q+1$.

For the rest of the proof we assume that $R$ is in $\mathbf{G}(r)$.  Thus, its Koszul 
homology algebra $A$ has the form $A=B\ltimes W$, where $B$ is a Poincar\'e duality 
$k$-algebra with 
  \[
\rank_kB_1=r=\rank_kB_2\,,
\quad\rank_kB_3=1\,,
\quad
B_1\cdot B_1=0
  \]
and $W$ is a graded $B$-module with $B_+W=0$.  For every graded $B$-module $N$,
set $N'=\Hom kN{\shift^3k}$ and endow this graded vector space with the natural 
$B$-module structure described in \ref{DG}.

Choose $\beta\in (B')_0$ with $\Ker(\beta)=B_{\les2}$.  As $B$ has Poincar\'e duality, 
the homomorphism of left graded $B$-modules $\alpha\col B\to B'$ with $\alpha(1)=\beta$ is 
bijective; thus, 
  \begin{equation}
    \label{eq:Astar}
A'=B\beta\oplus  W'
  \quad\text{and}\quad
\alpha\col B\cong B\beta
  \end{equation}
as graded $B$-modules, where $B$ act on $A$-modules through the inclusion 
$B\subseteq A$.

As we may assume that $R$ is complete, we fix a Cohen presentation $R\cong P/I$,
a minimal resolution $F$ of $R$ as a $P$-module, a DG $P$-algebra structures on 
$F$; see~\ref{cst}.  Set $F'=\Hom PF{\shift^3P}$ and turn $F'$ into a DG $F$-module,
as in~\ref{DG}.  

Using \eqref{eq:DGchain} to identify the graded algebras $F\otimes_Pk$ and 
$A$, we get isomorphisms 
  \[
F'\otimes_Pk\cong\Hom PF{k\otimes_P\shift^3P}\cong\Hom k{F\otimes_Pk}{\shift^3k}=A'
  \]
of graded $A$-modules.  Choose $\xi\in F'_{0}$, so that these maps
send $\xi\otimes1$ to $(\beta,0)\in A'$; see \eqref{eq:Astar}.  The 
morphism $\phi\col F\to F'$ of left DG $F$-modules with $\phi(1)=\xi$  satisfies
  \begin{equation}
    \label{eq:phi}
(\phi\otimes_Pk)|_{B}=\alpha
  \quad\text{and}\quad
(\phi\otimes_Pk)|_{W}=0\,.
  \end{equation}

Let $Y$ denote the mapping cone of $\phi$.  We have $\HH_i(F)=0$ for $i\ge1$ 
by choice, and $\HH_i(F')=\ext{3-i}PRP=0$ for $i\ge 2$ because $h\le1$ holds
by Lemma \ref{lem:class}(3).  The exact sequences  
$\HH_{i-1}(F)\to\HH_i(Y)\to\HH_i(F')$ now yield $\HH_i(Y)=0$ for $i\ge2$.  

Note that $Y$ is a bounded complex of finite free $P$-modules.  If $y\in Y_i$ is an 
element with $\dd(y)=z\notin\fp Y_{i-1}$, then form a subcomplex of $Y$ as follows:
  \[
Z=\quad 0\to Py\xra{\dd|_{Py}}Pz\to0
  \]
Since $Z$ is contractible and splits off as a direct summand of $Y$, the natural 
morphism $Y\to Y/Z$ is a homotopy equivalence.   Iteration produces
a homotopy equivalence $Y\to X$, where $X$ is a bounded complex of 
finite free $P$-modules satisfying
  \begin{alignat}{2}
    \label{eq:Xmin}
\dd(X)&\subseteq\fp X\,,
  \\
    \label{eq:X}
\HH_i(X)&\cong\HH_i(Y)=0 &\quad&\text{for}\quad i\ge2\,,
  \\
    \label{eq:Xk}
X_i\otimes_Pk&\cong\HH_i(Y\otimes_Pk)
&\quad&\text{for}\quad i\in\BZ\,.
  \end{alignat}
  
The construction of $Y$ gives an isomorphism of complexes of $k$-vector spaces
  \[
Y\otimes_Pk\cong
\quad
\left\{\begin{gathered}
\xymatrixcolsep{2.1pc}
\xymatrixrowsep{-.2pc}
\xymatrix{
0 
& W_3
& W_2
& W_1
  \\
& \oplus
& \oplus
& \oplus
  \\
& B_3
\ar@{->}[ddr]^{\alpha_3}
& B_2
\ar@{->}[ddr]^{\alpha_2}
& B_1
\ar@{->}[ddr]^{\alpha_1}
& B_0
\ar@{->}[ddr]^{\alpha_0}
\\
& {\phantom{\oplus}}
& \oplus
& \oplus
  \\
& {\phantom{B_3\beta}}
& B_3\beta
& B_2\beta
& B_1\beta
& B_0\beta
\\
& {\phantom{\oplus}}
& {\phantom{\oplus}}
& \oplus
& \oplus
& \oplus
  \\
&&& W_2'
& W_1'
& W_0'
& 0
  }
\end{gathered}
\right.
  \]
where in view of \eqref{eq:Astar} and \eqref{eq:phi} all maps not represented by arrows are 
equal to zero and each $\alpha_i$ is bijective.  Now \eqref{eq:Xk} yields isomorphisms 
of vector spaces
  \begin{equation}
    \label{eq:W}
X_i\otimes_Pk\cong \begin{cases}
W'_{i} &\text{for}\quad i=0,1\,,\\
W'_{2}\oplus W_1&\text{for}\quad i=2\,,\\
W_{i-1} &\text{for}\quad i=3,4\,.
  \end{cases}
    \end{equation}
The following equalities come from the definitions of $W'$ and $W$, \eqref{eq:switch} and \eqref{eq:m}:
  \begin{alignat*}{3}
\rank_kW_0'&=\rank_kW_3 &&=\rank_kA_3-\rank_kB_3&&=n-1\,,
  \\
\rank_kW_1'&=\rank_kW_2 &&=\rank_kA_2-\rank_kB_2&&=l+n-r \,,
  \\
\rank_kW_2'&=\rank_kW_1&&=\rank_kA_1-\rank_kB_1&&=l+1-r\,.
  \end{alignat*}
As a result, we now know that the complex $X$ has the following form: 
  \[
X=\quad
0\xra{\phantom{\dd_1}} P^{n-1}\xra{\,\dd_4\,} P^{l+n-r}\xra{\,\dd_3\,} 
P^{2(l+1-r)}\xra{\,\dd_2\,}P^{l+n-r}\xra{\,\dd_1\,} P^{n-1}\xra{\phantom{\dd_1}} 0
  \]

The inclusion $B_1\subseteq A_1$ yield $r\le l+1$.  We finish the proof by showing that
if $r=l+1$, then $R$ is Gorenstein, and that $r=l$ is not possible.

If $r=l+1$, then $X_2=0$, so the map $\dd_4\col P^{n-1}\to P^{n-1}$ 
is bijective.  In view of \eqref{eq:Xmin}, this forces $n=1$, hence $X=0$. {From}
\eqref{eq:W} we get $W=0$, so $A$ has Poincar\'e duality, and hence $R$ is Gorenstein
by \cite[Thm.]{AG}; see also \cite[3.4.5]{BH}.

Assume now $r=l$.  By  \eqref{eq:Xmin}  and \eqref{eq:X}, $\dd_2(X_2)$ has a minimal free 
resolution 
  \[
0\to P^{n-1}\to P^{n}\to P^{2}\to 0
  \] 
Since $\dd_2(X_2)$ is torsion-free, it is isomorphic to an ideal of $P$ minimally 
generated by two elements.  Such ideals have projective dimension one, see
Lemma \ref{lem:class}(2), hence $n=1$.  As $W_1\ne0$, the algebra $A$ does 
not have Poincar\'e duality, so the ring $R$ is not Gorenstein; see \ref{Gorl}. Thus, 
parts (3) and (4) of
Lemma \ref{lem:class} imply $h=1$; that is, $\dim R=d+1$.  A result of Foxby 
\cite[3.7]{Fo2} (for equicharacteristic $R$) and Roberts \cite{Ro} (in general) 
now gives $\mu^{d+1}_R\ge2$.  This inequality, the exact sequence
  \[
0\to\ext{d+1}RkR\to  A_{2}\xra{\delta_2}\Hom k{A_1}{A_3}
  \]
of \cite[Prop.\,1]{AG}, see also \cite[3.4.6]{BH}, the equality \eqref{eq:m}, 
and our assumption yield 
  \[
2\le\mu^{d+1}_R=l+n-r=1\,.
  \]
We have obtained a contradiction, and this finishes the proof of the lemma.
  \end{proof}

  \begin{proof}[Proof of Theorem \emph{\ref{thm:class}}]
For the values of $c$, $p$, $q$, and $r$, see Remark \ref{tabulated}.

Lemma \ref{lem:class}(3) yields $h\le2$, with strict inequality when $R$ is not in $\mathbf{H}(0,0)$. 

For $l$ and $n$ we argue one class at a time.

\begin{Subsec}[Class $\mathbf{S}$]
We have $l\ge 2-h$ by Lemma \ref{lem:class}(1) and $n=0$ by \eqref{eq:ab}.
  \end{Subsec}

  \begin{Subsec}[Class $\mathbf{T}$]
Lemma \ref{lem:class}(1) gives $l\ge 3-h$.  Rings in $\mathbf{T}$ have $q=0$, 
so case (b) in Lemma \ref{lem:alt} implies $l=1$; since $c=3$, this is ruled out 
by Lemma \ref{lem:class}(2).  Thus, the inequalities (a) of Lemma \ref{lem:alt}
hold, and they give $n\ge2$.
  \end{Subsec}

  \begin{Subsec}[Class $\mathbf{B}$]
Lemma \ref{lem:class}(4) gives $n\ge 2-h\ge1$, while  
Lemma \ref{lem:Gr} yields $l\ge r+1=3$.  By \ref{4rel}, the class $\mathbf{B}$ contains 
no ring with $h=0$ and $l=3$, so $l\ge4-h$ holds.  
  \end{Subsec}

  \begin{Subsec}[Class $\mathbf{G}(r)$]
Here $p=0$, so case (b) in Lemma \ref{lem:alt} gives $n=-1$, which is absurd.  
Thus, case (a) holds, whence $l\ge3$.  By \ref{4rel}, in $\mathbf{G}(r)$ there are 
no rings with $h=0$ and $l=3$, hence $l\ge4-h$ holds.  So does $l\ge r+1$, 
by Lemma~\ref{lem:Gr}.
  \end{Subsec}

  \begin{Subsec}[Class $\mathbf{H}(p,q)$]
Parts (1) and (3) of Lemma \ref{lem:class} give $l\ge\max\{3-h,2\}$. 

By definition, $A=(C\otimes_kD)\ltimes W$ with $C=k\ltimes(\shift k^p\oplus\shift^2k^q)$,
$D=k\ltimes\shift k$, and $C_+W=0=D_+W$.  The relations $A_1\supsetneq 
C_1\cong C_1\otimes_kD_1=A_1\cdot A_1$ imply  $l\ge p$, while $A_3\supseteq 
A_1\cdot A_2=C_2\otimes_kD_1\cong C_2$ yield $n\ge q$.  On the other hand, 
from Lemma \ref{lem:alt} we obtain the inequalities $l\ge q+1$ and $n\ge p-1$. 
 \qedhere
  \end{Subsec}
    \end{proof}

  \begin{proof}[Proof of Corollary \emph{\ref{cor:class}}]
When $l=q+1$ the values of $q$ and bounds for $l$ in 
Theorem \ref{thm:class} show that $R$ is in $\mathbf{H}(p,q)$. 
Lemma \ref{lem:alt} now gives $n=p-1$, so (i) implies (iii).

If (iii) holds, then we have a string $l\ge p=n+1\ge q+1= l$, where the inequalities 
come from Theorem \ref{thm:class}, the first equality is given by Lemma \ref{lem:alt},
and the second one holds by hypothesis.  We get $l=p$ and $n=q$, which is (ii).  

Assuming that (ii) holds, we see from Theorem \ref{thm:class} that $R$
is in $\mathbf{H}(p,q)$. The description of $A$ in \ref{table} then yields
$A\cong C\otimes_kD$ with $D=k\ltimes\shift k$.  In particular, if $a$ is a 
non-zero element in $1\otimes_kD_1$, then $A$ is free as a graded module 
over its subalgebra generated by $a$.  Now \cite[3.4]{Av:msri} shows that (iv) holds.

When (iv) holds $\Tor iP{P/J}{P/zP}=0$ for $i\ge1$, so we have isomorphisms 
  \begin{align*}
A&\cong\Tor{}P{P/(J+zP)}k
  \\
&\cong\Tor{}P{P/J}k\otimes_k\Tor{}P{P/zP}k
  \\
&\cong\Tor{}P{P/J}k\otimes_k(k\ltimes\shift k)
  \end{align*}
of graded $k$-algebras.  They imply $\Tor2P{P/J}k\otimes_k\shift k\cong A_3$
and  $\operatorname{pd}_P(P/J)=2$ the latter because $A_i=0$ holds for $i>3$ 
by \eqref{eq:ab}. We get a string of equalities
  \[
q=\rank_kA_3=\rank_k\Tor2P{P/J}k=\rank_k(J/\fp J)-1=l-1\,,
  \]
where the third one comes from \eqref{eq:m} and \eqref{eq:codim}.  Thus, (iv) implies (i).
  \end{proof}

To complete the classification of rings $R$ with $\dim R-\depth R=3$ along the lines 
of \ref{table} and the results in this section, one needs to determine for those rings
\emph{all} the restrictions satisfied by the invariants in \ref{invariants}.  This leads to:

  \begin{question}
Which sextuples $(h,l,n,p,q,r)$, allowed by Theorem \ref{thm:class}, Corollary 
\ref{cor:class}, or the results cited in \ref{smalll}, are realized by some local ring 
$R$ with $c=3$?
  \end{question}

The list of available answers is not long and runs as follows.

  \begin{subsec}
     \label{exist}
Let $(P,\fp,k)$ be a regular local with $\dim P=e\ge3$ and $x_1,\dots, x_e$ a minimal 
set of generators of $\fp$.  We describe rings $R=P/I$ with $c=3$ by specifying $I$.

  \begin{subsubsec}
    \label{preceding}
The rings admitted by \ref{Gorl}, \ref{3rel}, \ref{4rel}, and \ref{brown} are realized by 
ideals $I$ constructed in \cite[6.2]{BE2}, \cite[7.7]{Av:small}, \cite[Rem.\,(1), p.171]{Av:aci},   
and \cite[3.4, 3.6]{Br}, respectively.
   \end{subsubsec}

  \begin{subsubsec}
    \label{Hexist}
The following sextuples $(h,l,n,p,q,r)$ with $l=q+1$ are realized:
  \begin{enumerate}[\quad\rm(a)]
    \item
$(0,2,1,3,1,3)$ by $I=(x_1^2,x_2^2,x_3^2)$.
    \item
$(0,l,l-1,l,l-1,l-1)$ by $I=(x_1,x_2)^{l-1}+(x_3^2)$ for each $l\ge3$.
    \item
$(1,l,l-1,l,l-1,l-1)$ by $x_1(x_1,x_2)^{l-1}+(x_3^2)$ for each $l\ge2$.
  \end{enumerate}
There are no other sextuples with $l=q+1$, by Corollary \ref{cor:class}
and Lemma~\ref{lem:class}(3).
   \end{subsubsec}

   \begin{subsubsec}
Every sextuple $(2,l,n,0,0,0)$ with $l\ge2$ and $n\ge1$ is realized when
$k=\mathbb{C}$.

Indeed, for each such pair $(l,n)$ Weyman \cite{We1} shows that  
$P=\mathbb{C}[\![x_1,\dots,x_e]\!]$ contains an ideal $J$ with 
$\rank_k\Tor1P{P/J}k=l+1$ and $\rank_k\Tor3P{P/J}k=n$.  On the other 
hand, if $w$ is a $P$-regular element, then $P/J$ with $J=wI$ realizes the 
sextuple $(2,l,n,0,0,0)$, since for each $i\ge1$ there are isomorphisms of vector spaces
  \[
\Tor iP{P/I}k\cong\Tor{i-1}P{I}k\cong\Tor{i-1}P{wI}k\cong\Tor iP{P/J}k\,,
  \]
and Shamash \cite[Thm.\,(3), p.\,467]{Sh} shows that $P/wI$ is Golod; 
see also \cite[5.2.5]{Av:barca}.

There are no other sextuples with $h=2$, by Lemma \ref{lem:class}(3).
   \end{subsubsec}
  \end{subsec}

The only known examples in $\mathbf{G}(r)$ are the Gorenstein rings. 
We propose:

  \begin{conjecture}
If $R$ is in $\mathbf{G}(r)$ for some $r\ge2$, then $R$ is Gorenstein.
  \end{conjecture}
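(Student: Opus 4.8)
The plan is to argue by contradiction: suppose $R$ is in $\mathbf{G}(r)$ for some $r\ge2$ but is not Gorenstein (hence not complete intersection). Completing $R$ and fixing a Cohen presentation $R=P/I$ as in \ref{cst}, Lemma \ref{lem:Gr} gives $l\ge r+1$. I would keep all the constructions from the proof of Lemma \ref{lem:Gr}: the minimal DG $P$-algebra resolution $F$ of $R$, the DG $F$-module $F'=\Hom PF{\shift^3P}$, the morphism $\phi\col F\to F'$ with $(\phi\otimes_Pk)|_B=\alpha$ and $(\phi\otimes_Pk)|_W=0$, its mapping cone $Y$ (with $\HH_i(Y)=0$ for $i\ge2$), and the minimal complex $X$ homotopy equivalent to $Y$,
\[
X=\quad 0\to P^{n-1}\xra{\,\dd_4\,} P^{l+n-r}\xra{\,\dd_3\,} P^{2(l+1-r)}\xra{\,\dd_2\,}P^{l+n-r}\xra{\,\dd_1\,} P^{n-1}\to0\,,
\]
which is non-zero because $l\ge r+1$. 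Set $N=\HH_0(X)=\HH_0(Y)$ and $M=\HH_1(X)=\HH_1(Y)$; since $Y$ is a DG $F$-module these are $\HH_0(F)=R$-modules, and the homology exact sequence of the cone identifies $N$ with $\Coker\bigl(\HH_0(\phi)\col R\to\ext3PRP\bigr)$ and yields an exact sequence $0\to\ext2PRP\to M\to\Ker\bigl(\HH_0(\phi)\bigr)\to0$.

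The first step, valid in every characteristic, is a duality vanishing for $Y$. As $F$ resolves $R$ over the regular ring $P$ it is a perfect complex, so $\Hom P{F'}P=\Hom P{\Hom PF{\shift^3P}}P\simeq\shift^{-3}F$ has homology concentrated in homological degree $-3$, where it is $R$; and $\Hom PFP$ has cohomology $\ext jPRP$, which vanishes for $j<\height_PI=c-h$, hence for $j\le1$ since $h\le1$. Applying the contravariant functor $\Hom P-P$ to the triangle $F\to F'\to Y\to$ and reading off cohomology therefore gives $\ext iPYP=0$ for all $i\le2$. Because $X$ is a bounded complex of finite free modules with homology only in degrees $0$ and $1$, the truncation triangle $\shift M\to X\to N\to$ produces an exact sequence
\[
\cdots\to\ext iPNP\to\ext iPXP\to\ext{i+1}PMP\to\ext{i+1}PNP\to\cdots\,;
\]
since $\ext iPXP=\ext iPYP=0$ for $i\le2$, and since $N$ is a quotient of $\ext3PRP$ and so $\operatorname{grade}_PN\ge3$ (whence $\ext mPNP=0$ for $m\le2$), one reads off $\ext mPMP=0$ for $m\le2$.

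This already rules out $h=1$: then $M$ contains $\ext2PRP$, which is the first non-vanishing $\ext_P(R,P)$ (as $c-h=2$) and hence a maximal Cohen-Macaulay $R$-module, so $\operatorname{grade}_PM=2$ and $\ext2PMP\ne0$ --- a contradiction. Thus \emph{a non-Gorenstein ring in $\mathbf{G}(r)$ must be Cohen-Macaulay}. Then $\HH_1(F')=\ext2PRP=0$, so $F'$ is a minimal $P$-resolution of $\omega_R=\ext3PRP$, the morphism $\phi$ realizes a map $\psi\col R\to\omega_R$ with $\psi\otimes_Pk\ne0$, and $0\to M\to R\xra{\ \psi\ }\omega_R\to N\to0$ is exact. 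Moreover $\Hom P-{\shift^3P}$ interchanges $R$ with $\omega_R$ (because $\omega_{\omega_R}\cong R$) and $\psi$ is self-dual for it --- viewed as an element of $\Hom RR{\omega_R}=\omega_R$, every $R$-linear map $R\to\omega_R$ is self-dual --- so $\Hom PX{\shift^3P}\simeq\shift^{-1}X$, and therefore $\ext iPXP\cong\HH_{4-i}(X)$ for all $i$; in particular $\ext3PXP\cong M$ and $\ext4PXP\cong N$. Fed back into the exact sequence above, this rigidly constrains $M$ and $N$: their minimal $P$-resolutions are forced to be self-dual and to fit together into the single self-dual complex $X$.

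The conjecture would now follow from $N=0$: then $\psi$ is surjective, so $\type R=\rank_k(\omega_R\otimes_Pk)=1$, so $R$ is Cohen-Macaulay of type $1$, hence Gorenstein, whence $l=r-1$ by \ref{Gorl} --- contradicting $l\ge r+1$. Proving $N=0$ is the crux, and the formal constraints above do not deliver it: a non-zero $R$-module $N$ with $\operatorname{grade}_PN\ge3$ and the displayed self-dual $P$-resolution is not obviously excluded. I expect that closing this gap requires genuinely new input --- iterating the cone-and-duality construction with $\omega_R$ (or with $N$) in place of $R$; or combining the explicit Bass series $\ba R$ of Theorem \ref{thm:series} with the relations of Lemma \ref{lem:alt} and the Golod homomorphism $Q\to R$ produced there; or using the multiplicative structure of the DG algebra $F$ to control $\phi$ beyond its residue $\phi\otimes_Pk$. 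This last reduction is, I believe, the hard part, and it is exactly what keeps the statement a conjecture rather than a theorem.
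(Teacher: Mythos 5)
The statement you are addressing is labeled a \emph{conjecture} in the paper: there is no proof to compare against, and the author explicitly says only that Lemma \ref{lem:Gr} ($l\ge r+1$) is a first step towards verifying it. You are right to treat the statement as open and not to claim a proof.

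That said, your partial analysis is correct and in one respect goes genuinely beyond what the paper records. Theorem \ref{thm:class} gives only $h\le1$ for $\mathbf{G}(r)$; your argument that $h=1$ is impossible for a non-Gorenstein ring in this class --- obtaining $\ext iPYP=0$ for $i\le2$ from the triangle $F\to F'\to Y$, then $\ext jPMP=0$ for $j\le2$ from the truncation triangle for $X$, and contradicting this with $\operatorname{grade}_PM\le2$ once the nonzero module $\ext2PRP$ sits inside $M$ --- is sound. One minor repair: the clause ``hence a maximal Cohen-Macaulay $R$-module'' is neither needed nor justified in general; when $\dim R\ge3$ the module $\ext2PRP$ satisfies Serre's condition $(S_2)$ but need not be maximal Cohen-Macaulay. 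What the argument actually uses is only $\operatorname{grade}_P\ext2PRP=2$, which follows already from $\dim_P\ext2PRP=\dim R=e-2$. The subsequent self-duality observations in the Cohen-Macaulay case, and your identification of $N=0$ (equivalently $\type R=1$) as the crux that the formal constraints cannot reach, are also accurate; that unresolved step is exactly what keeps the statement a conjecture.
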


Lemma \ref{lem:Gr} is a first step towards a verification of this statement.  
If proved in full, it will eliminate an entire family from the classification in 
Theorem \ref{thm:class}.  

Another elusive class is $\mathbf{B}$, for which the only examples are those 
in \cite{Br}.  Rings in~$\mathbf{T}$ appear in several situations, and the families
$\mathbf{H}(p,q)$ seem to be ubiquitous.

\section{Bass numbers}
   \label{S:Bass numbers}

The following theorem is the third main result of this paper.  

  \begin{theorem}
    \label{thm:growth}
Let $(R,\fm,k)$ be a local ring, and set $e=\edim R$ and $d=\depth R$.

When $e-d\le3$ and $R$ is not Gorenstein there is real number 
$\gamma{\vphantom{\mu^{d}}}_R>1$, such that
  \begin{equation}
    \label{eq:big}
\mu^{d+i}_R\ge\gamma{\vphantom{\mu^{d}}}_R\,\mu^{d+i-1}_R
\quad\text{holds for every}\quad i\ge1\,,
   \end{equation} 
with two exceptions for $i=2$: If there exists an isomorphism
  \begin{align}
    \label{eq:exception2}
\widehat R&\cong P/(wx,wy)
\quad\text{or}
  \\
    \label{eq:exception3}
\widehat R&\cong P/(wx,wy,z)\,,
   \end{align} 
where $(P,\fp,k)$ is an $e$-dimensional regular local ring, $w$ a $P$-regular element, 
$x,y$ a $P$-regular sequence, and $z$ a $P/(wx,wy)$-regular element in $\fp^2$, then
  \[
\mu^{d+2}_{R}=\mu^{d+1}_{R}=2 \,.
  \]
  
In particular, when $R$ is Cohen-Macaulay the inequalities \eqref{eq:big} hold for all $i$.
 \end{theorem}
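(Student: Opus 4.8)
The plan is to reduce Theorem \ref{thm:growth} to a purely numerical statement about the coefficients of the rational functions $f(t)$ and $g(t)$ from Theorem \ref{thm:series}, exactly as indicated in the introduction. Since $\ba R=t^d\cdot f(t)/g(t)$, the inequality \eqref{eq:big} for all $i\ge1$ is equivalent to a strong log-concavity/ratio condition on the Taylor coefficients of $f(t)/g(t)$. First I would establish the weaker strict inequality $\mu^{d+i}_R>\mu^{d+i-1}_R$ for $i\ge1$, outside the exceptional cases; this is where the three inputs named in the introduction come in. The asymptotic behavior (constancy vs.\ exponential growth of Bass numbers) is governed by the denominator $g(t)$: by the results of \cite{Av:msri} together with \cite{Su}, once one knows the Bass sequence is \emph{eventually} strictly increasing and unbounded, one gets \eqref{eq:big} for $i\gg0$ with some $\gamma>1$ coming from the smallest root of $g(t)$; so the real work is controlling the \emph{initial} terms and showing no strict drop occurs, and that the single possible equality is precisely the one recorded in \eqref{eq:exception2}--\eqref{eq:exception3}.

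The second step is to run through the table of Theorem \ref{thm:series} class by class, using the constraints from Theorem \ref{thm:class} and Lemma \ref{lem:class} to pin down the admissible ranges of $l,n,p,q,r,h$. For each class one has an explicit $g(t)$ and $f(t)$, and one computes the first few Bass numbers $\mu^{d},\mu^{d+1},\mu^{d+2},\dots$ directly as polynomial expressions in these invariants (e.g.\ $\mu^d=n$ when $c=3$, matching Lemma \ref{lem:class}(4); then $\mu^{d+1}$, $\mu^{d+2}$ are read from $f$ and $g$). The bounds $l\ge r+1$ (Lemma \ref{lem:Gr}), $l\ge q+2$ or the degenerate alternative $l=q+1,\ n=p-1-\tau_R$ (Lemma \ref{lem:alt}), $n\ge 2-h$, $h\le2$, etc., are exactly what forces $\mu^{d+i}_R>\mu^{d+i-1}_R$ in all but the boundary configurations. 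The case $c\le2$ (class $\mathbf S$, plus $c\le1$) is handled separately and directly: there $\widehat R\cong P/(wx,wy)$ up to the regular parameter $w$ by Lemma \ref{lem:class}(2), and one checks $f(t)/g(t)=(l+t-t^2)/(1-t-lt^2)$ gives $\mu^{d+1}=\mu^{d+2}$ precisely when $l=1$, i.e.\ the exceptional case \eqref{eq:exception2}; similarly the $c=3$ degenerate case of Lemma \ref{lem:alt}(b) combined with Corollary \ref{cor:class} yields exactly \eqref{eq:exception3}.

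The third step promotes the strict inequalities to the uniform exponential bound \eqref{eq:big} with a single $\gamma_R>1$. For this I would invoke the fact (from \cite{Av:msri}, reproved via the rational form of $\ba R$) that $g(t)$ has a unique root $\rho$ of smallest modulus, which is real, simple, and lies in $(0,1)$ whenever $R$ is not complete intersection and not Golod-of-a-trivial-type; the sequence $\mu^{d+i}_R\rho^i$ then converges to a positive limit, and a standard argument (cf.\ \cite{Su}) combined with the already-established strict monotonicity at the low end shows one can take $\gamma_R$ to be any number in $(1,1/\rho)$ close enough to $1$, after checking finitely many initial ratios by hand. For the Cohen-Macaulay case one has $h=0$, so the exceptional presentations \eqref{eq:exception2}--\eqref{eq:exception3} (which have $h=1$) cannot occur, giving the final ``in particular'' clause for free.

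I expect the main obstacle to be the bookkeeping in the second step: one must verify, separately in classes $\mathbf T$, $\mathbf B$, $\mathbf G(r)$, $\mathbf H(p,q)$, and in the small-$l$ sub-cases of \ref{smalll}, that the polynomial inequality $\mu^{d+i}_R>\mu^{d+i-1}_R$ holds for \emph{every} $i$, not just asymptotically, and that the \emph{only} way equality sneaks in is at $i=2$ under the stated hypotheses. The delicate point is that extracting sign information about Taylor coefficients of $f(t)/g(t)$ from the coefficients of $f$ and $g$ is genuinely not formal (as the introduction stresses); here it is made tractable only because Theorem \ref{thm:class} squeezes the invariants into narrow ranges and because the denominators factor nicely (the dominant root argument). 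Getting the exceptional cases \emph{exactly} right—no more, no less—will require the full strength of Corollary \ref{cor:class} and Lemma \ref{lem:class}(2).
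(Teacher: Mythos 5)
Your plan is structurally the same as the paper's: first establish the strict inequalities $\mu^{d+i}_R>\mu^{d+i-1}_R$ (with the two exceptions) by a class-by-class computation using Theorem \ref{thm:series} and the constraints from Theorem \ref{thm:class}, Lemma \ref{lem:alt}, Lemma \ref{lem:Gr}, and Corollary \ref{cor:class}; then promote these to a uniform multiplicative bound $\gamma_R>1$ by appealing to asymptotics. Your identification of the two exceptional presentations (via Lemma \ref{lem:class}(2) for $c=2$, and Lemma \ref{lem:alt}(b) combined with Corollary \ref{cor:class} for $c=3$) and your ``in particular'' remark that the exceptions force $h=1$, so Cohen--Macaulay rings are exempt, are both exactly right.

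The gap is in the final upgrade. You propose to extract $\gamma_R$ directly from the dominant root $\rho$ of $g(t)$, asserting that $\mu^{d+i}_R\rho^i$ converges to a positive limit. That assertion is not automatic: the Bass series is $\ba R=t^df(t)/g(t)$, and you would need to rule out that $f(\rho)=0$ --- i.e.\ that the numerator kills the pole at $\rho$ --- before concluding that $\ba R$ actually has a simple pole at $\rho$ with nonzero residue. This is precisely the kind of ``read off Taylor-coefficient behavior from a numerator/denominator pair'' difficulty that the introduction flags as classically hard, and your proposal does not address it. The paper circumvents the issue entirely: it invokes Foxby's construction \cite[3.10]{Fo2} to produce a finite $R$-module $N$ with $\mu^{d+i}_R=\beta^R_i(N)$ for $i\ge\dim R-d$, then cites the Betti-number dichotomy of \cite[1.4, 1.6]{Av:msri} (strongly exponential growth or eventual constancy) and Sun's inequality \cite[1.2(c)]{Su}, both of which are theorems about Betti sequences of modules, not about Bass sequences. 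Lemma \ref{lem:growth} rules out eventual constancy, the cited theorems supply the eventual ratio bound, and then one checks finitely many initial ratios. Your route could perhaps be salvaged by showing $f$ and $g$ share no common factor in each class, but that is real additional work, whereas the Foxby reduction makes it unnecessary; it is the key step missing from your writeup.

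You also gloss over a technical device the paper relies on in the class-by-class step: Lemma \ref{lem:comparison}, which uses the structural product formula for $\Po Rk$ (coming from \cite{GL}) to convert nonnegativity of Taylor coefficients of certain transformed numerators into inequalities $\mu^{d+i}_R-\mu^{d+i-1}_R\ge a_i$ or $\ge b_i$. Without something like this, directly verifying strict monotonicity of $f/g$ coefficientwise in each class would be substantially harder; it is worth being aware that this intermediate lemma is doing real work.
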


The theorem should be viewed in the context of a number of problems raised in recent 
publications, sometimes under the hypothesis that $R$ is Cohen-Macaulay.  We say 
that a sequence $(a_i)$ of real numbers is said to have \emph{strongly exponential 
growth} if $\beta^i\ge a_i\ge\alpha^i$ hold for all $i\gg0$ for some real numbers 
$\beta\ge\alpha>1$.  

%

\begin{problems}
  \label{qu}
Assume that $(R,\fm,k)$ is a non-Gorenstein local ring.
  \begin{enumerate}[\rm(1)]
  \item
Determine the number $\inf\{j\in\BZ\mid \mu^{d+i}_R>\mu^{d+i-1}_R\text{ for all }i\ge j\}$.
(See \cite[1.3]{CSV}.)
  \item
Does $\mu^{d+1}_R>\mu^d_R$ always hold?  
(See \cite[2.6]{JL}.)
  \item
Does $\mu^i_R\ge2$ hold for all $i>\dim R$? 
(See \cite[1.7]{CSV}.)
  \item
Does the sequence $(\mu^i_R)$ have strongly exponential 
growth?
(See \cite[p.\,647]{JL}.)
  \end{enumerate}
  \end{problems}

All of these questions are open in general.  Here is a list of the known answers:

 \begin{remark}
    \label{rem:answers}
Assume that $(R,\fm,k)$ is not Gorenstein.
  \begin{enumerate}[\rm(1)]
  \item
An inequality $\mu^{d+i}_R>\mu^{d+i-1}_R$ holds for $i\ge1$ in the following cases: 
  \begin{enumerate}[\quad\rm(a)]
  \item
$\fm^3=0$; see \cite[5.1]{CSV}.
  \item
$R\cong Q/(0:\fq)$ for some Gorenstein local ring $(Q,\fq,k)$; see \cite[6.2]{CSV}.
  \item
$R\cong S\times_kT$ with $S\ne k\ne T$, \emph{except}  when $S$ is a 
discrete valuation ring, and either $\edim T=1>\dim T$ or 
$\edim T=2=\dim T$; see \cite[3.3]{CSV}.
  \item
$R$~is Golod, \emph{except} when $e-d=2$ and $\mu^d_R=1$; see \cite[2.4]{CSV}.
  \end{enumerate}
In addition, $\mu^{d+i}_R>\mu^{d+i-1}_R$ is known to hold in the following cases:
  \begin{enumerate}[\quad\rm(a)]
  \item[\rm(e)]
for $i\ge3$ if $R$ is among the exceptions in (c) and (d); see \cite[2.5, 3.2]{CSV}.
  \item[\rm(f)]
for $i\gg0$ if $R$  is Cohen-Macaulay with $e-d\le3$; see \cite[1.1]{JL}.
  \end{enumerate}
\item
holds when $R$ is Cohen-Macaulay and is generically Gorenstein;
see \cite[2.3]{JL}.
\item
holds when $R$ is a domain, see \cite[p.\,67]{Ro:mont}, 
or is Cohen-Macaulay; see \cite[1.6]{CSV}.
\item

holds in  cases (a) through (f) of (1); see the references given above.
  \end{enumerate}
  \end{remark}

For rings with $e-d\le3$, Theorem \ref{thm:growth} provides sharp answers to all the questions 
in~\ref{qu}.  The next remark shows that the theorem also implies \ref{rem:answers}(1)(e).

 \begin{remark}
    \label{rem:exceptions}
Let $S$ be a discrete valuation ring and $(T,\ft,k)$ a local ring.

If $\edim T=1$ and $\ft^s=0\ne\ft^{s-1}$, then $R=S\times_kT$ satisfies $\widehat R\cong P/(wx, w^s)$, 
where $(P,\fp,k)$ is a regular local ring and $\{w,x\}$ is a minimal generating set for $\fp$. 

If $T$ is regular of dimension $2$,  then $R=S\times_kT$ satisfies $\widehat R\cong P/(wx, wy)$, where 
$(P,\fp,k)$ is a regular local ring and $\{w,x,y\}$ is a minimal generating set for $\fp$. 
  \end{remark}

In preparation for the proof of Theorem \ref{thm:growth}, we establish a technical 
result where the hypotheses are made on the Bass series of $R$ and the 
Poincar\'e series of $k$, not on the ring $R$ itself.   The argument relies on 
general properties of $\Po Rk$.
  
  \begin{lemma}
    \label{lem:comparison}
Let $(R,\fm,k)$ be a local ring and let $d$, $e$, $l$, $m$, and $p$ be as in \emph{\ref{invariants}}.

Assume there exist polynomials $f(t)$ and $g(t)$ in $\BZ[t]$,
such that
  \begin{equation}
    \label{eq:rational}
\Po Rk=\frac{(1+t)^{e-1}}{g(t)}
  \quad\text{and}\quad
\ba R=t^d\cdot\frac{f(t)}{g(t)}\,.
  \end{equation}
  \begin{enumerate}[\rm(1)]
   \item
If $\sum_{i=0}^\infty a_it^i$ is the Taylor expansion of $(f(t)-g(t))/(1-t^2)$, 
then
 \begin{align*}
\mu^{d}_R&=a_0+1\,,
  \\
\mu^{d+1}_R-\mu^{d}_R&=a_1-1\,,
  \\
\mu^{d+2}_R-\mu^{d+1}_R&=a_2+(l-1)a_0\,.
\intertext{In case $l\ge1$ and $a_i$ is non-negative for $i\ge1$ the following inequalities hold:}
\mu^{d+i}_R-\mu^{d+i-1}_R& \ge a_{i}+(l-1)a_{i-2} \ge a_i
  \quad\text{for}\quad i\ge2\,.
  \end{align*}
    \item
If $\sum_{i=0}^\infty b_it^i$ is the Taylor expansion of ${f(t)}(1+t^3)^s/{(1-t^2)^{2}}$, where
$s$ is an integer satisfying $0\le s\le m-p$, then
 \begin{align*}
\mu^{d}_R&=b_0 \,,
  \\
\mu^{d+1}_R-\mu^{d}_R&=b_1 \,,
  \\
\mu^{d+2}_R-\mu^{d+1}_R&=b_2+(l-2)b_0\,.
\intertext{In case $l\ge2$ and $b_i$ is non-negative for $i\ge1$ the following inequalities hold:}
\mu^{d+i}_R-\mu^{d+i-1}_R& \ge b_{i}+(l-2)b_{i-2} \ge b_i
\quad\text{for}\quad i\ge2\,.
  \end{align*}
 \end{enumerate}
   \end{lemma}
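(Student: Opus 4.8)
The plan is to convert the assertion into an identity of formal power series and then read off coefficients, using as the only external input the factorization of $\Po Rk$ governed by the deviations of $R$. Throughout write $\ba R/t^d=\sum_{i\ges0}\mu^{d+i}_R\,t^i$ (legitimate since $\mu^i_R=0$ for $i<d$). Combining the two equalities in \eqref{eq:rational} with the relation $\Po Rk=(1+t)^e\cdot\Po Kk$ recalled in \emph{\ref{cst2}} gives $g(t)=\big((1+t)\Po Kk\big)^{-1}$, hence $1/g(t)=(1+t)\Po Kk$. In case (1), substituting $f(t)=g(t)+(1-t^2)\sum_{i}a_it^i$ into $\ba R/t^d=f(t)/g(t)$ yields
\[
\sum_{i\ges0}\mu^{d+i}_R\,t^i = 1+(1-t^2)(1+t)\Big(\sum_{i\ges0}a_it^i\Big)\Po Kk\,;
\]
in case (2), substituting $f(t)=\big(\sum_{i}b_it^i\big)(1-t^2)^2(1+t^3)^{-s}$ yields
\[
\sum_{i\ges0}\mu^{d+i}_R\,t^i = (1-t^2)^2(1+t)(1+t^3)^{-s}\Big(\sum_{i\ges0}b_it^i\Big)\Po Kk\,.
\]

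Using $(1-t)(1-t^2)(1+t)=(1-t^2)^2$ one extracts coefficients: the degree-$0$ and degree-$1$ terms give the first two displayed equalities in each part, and, for $m\ges2$, subtracting the coefficient of $t^{m-1}$ from that of $t^m$ shows that $\mu^{d+m}_R-\mu^{d+m-1}_R$ equals the coefficient of $t^m$ in $\big(\sum_ia_it^i\big)\cdot(1-t^2)^2\Po Kk$ in case (1), and in $\big(\sum_ib_it^i\big)\cdot(1-t^2)^3(1+t^3)^{-s}\Po Kk$ in case (2). Since $\Po Kk=1+(l+1)t^2+\cdots$ (its lowest terms are forced by $\Po Rk=(1+t)^e\Po Kk$, or by the factorization below), the series $(1-t^2)^2\Po Kk=\sum_jP_jt^j$ has $P_0=1$, $P_1=0$, $P_2=l-1$, and $(1-t^2)^3(1+t^3)^{-s}\Po Kk=\sum_j\Psi_jt^j$ has $\Psi_0=1$, $\Psi_1=0$, $\Psi_2=l-2$. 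Writing the coefficient of $t^m$ as a convolution and isolating $j=0,1,2$ gives, for $m\ges2$,
\[
\mu^{d+m}_R-\mu^{d+m-1}_R=a_m+(l-1)a_{m-2}+\sum_{j\ges3}a_{m-j}P_j
\]
in case (1), and the analogous formula with $b_i$, $\Psi_j$, $l-2$ in case (2); for $m=2$ the residual sum is empty, which is exactly the third displayed equality.

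What remains is to show the residual sums are non-negative, and this is the only real obstacle. Here I invoke the deviation factorization $\Po Rk=\prod_{i\ges1}(1+t^{2i-1})^{\varepsilon_{2i-1}}\big/\prod_{i\ges1}(1-t^{2i})^{\varepsilon_{2i}}$ with $\varepsilon_i=\varepsilon_i(R)\ges0$, $\varepsilon_1=e$, $\varepsilon_2=\rank_k\HH_1(K)=l+1$, and $\varepsilon_3=\rank_k\HH_2(K)-\rank_k\big(\HH_1(K)^2\big)=m-p$ (see, e.g., \cite[\S7, \S10]{Av:barca}). Dividing by $(1+t)^e$ gives $\Po Kk=\prod_{i\ges2}(1+t^{2i-1})^{\varepsilon_{2i-1}}\big/\prod_{i\ges1}(1-t^{2i})^{\varepsilon_{2i}}$, whence
\[
(1-t^2)^2\Po Kk=\frac{\prod_{i\ges2}(1+t^{2i-1})^{\varepsilon_{2i-1}}}{(1-t^2)^{\varepsilon_2-2}\prod_{i\ges2}(1-t^{2i})^{\varepsilon_{2i}}}
\]
and, for $0\les s\les m-p=\varepsilon_3$,
\[
(1-t^2)^3(1+t^3)^{-s}\Po Kk=\frac{(1+t^3)^{\varepsilon_3-s}\prod_{i\ges3}(1+t^{2i-1})^{\varepsilon_{2i-1}}}{(1-t^2)^{\varepsilon_2-3}\prod_{i\ges2}(1-t^{2i})^{\varepsilon_{2i}}}\,.
\]
When $l\ges1$ one has $\varepsilon_2-2\ges0$, so the first series is a quotient of power series with non-negative coefficients and $P_j\ges0$ for all $j$; when $l\ges2$ and $0\les s\les m-p$ one has $\varepsilon_2-3\ges0$ and $\varepsilon_3-s\ges0$, so likewise $\Psi_j\ges0$ for all $j$. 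Since $a_i\ges0$ for all $i\ges0$ — note $a_0=\mu^d_R-1\ges0$ — the residual sum is non-negative and $(l-1)a_{m-2}\ges0$, which yields both chains of inequalities; the argument in case (2) is identical with $b_i$ and $\Psi_j$. The crux is precisely the recognition that the hypotheses ``$l\ges1$ (resp.\ $2$)'' and ``$0\les s\les m-p$'' are exactly what is needed to make $(1-t^2)^2\Po Kk$ (resp.\ $(1-t^2)^3(1+t^3)^{-s}\Po Kk$) coefficientwise non-negative once the Koszul factor $(1+t)^e$ has been stripped from $\Po Rk$; everything else is convolution bookkeeping.
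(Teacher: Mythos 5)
Your proposal is correct and follows essentially the same route as the paper: both arguments hinge on the deviation factorization \eqref{eq:product} of $\Po Rk$, strip off $(1+t)^e$ to pass to $\Po Kk$, and observe that the stated bounds on $l$ and $s$ (via $\varepsilon_2=l+1$, $\varepsilon_3=m-p$) are exactly what makes $(1-t^2)^2\Po Kk$ and $(1-t^2)^3(1+t^3)^{-s}\Po Kk$ coefficientwise non-negative, after which the claims drop out by convolution. The only difference is notational: the paper packages the non-negativity into intermediate coefficient sequences $c_j$, $d_j$, while you write the factorizations out explicitly; you also helpfully make explicit the observation $a_0=\mu^d_R-1\ge0$, which the paper uses implicitly.
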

 
  \begin{Remark}
 One has $m-p=\rank_kA_2-\rank_k(A_1)^2=\rank_k(A_2/(A_1)^2)\ge0$.
   \end{Remark}

  \begin{proof}
Recall that the Poincar\'e series of $k$ can be written as a product
  \begin{equation}
    \label{eq:product}
\Po Rk=\frac{(1+t)^e(1+t^3)^{m-p}}{(1-t^2)^{l+1}}\cdot\frac{\prod_{i=2}^\infty(1+t^{2i+1})^{\varepsilon_{2i+1}}}
{\prod_{i=1}^\infty(1-t^{2i+2})^{\varepsilon_{2i+2}}}
  \end{equation}
with non-negative integers $\varepsilon_j\ge0$; see \cite[3.1.2(ii), 3.1.3]{GL} or \cite[7.1.4, 7.1.5]{Av:barca}.  

To compare consecutive Bass numbers, we will use the identity
  \begin{equation}
    \label{eq:difference}
\sum_{i\in\BZ}(\mu^{d+i}_R-\mu^{d+i-1}_R)\,t^i 
=(1-t)\frac{\ba R}{t^d}\,.
  \end{equation}

(1)  In view of \eqref{eq:product}, for $j\ge0$ there exist non-negative integers $c_j$, such that
  \begin{equation*}
\Po Rk=\frac{(1+t)^e}{(1-t^2)^{l+1}}\bigg(1+\sum_{j=3}^\infty c_jt^j\bigg)\,.
  \end{equation*}

Formulas \eqref{eq:difference} and \eqref{eq:rational} give equalities
  \begin{align*}
\sum_{i\in\BZ}(\mu^{d+i}_R-\mu^{d+i-1}_R)\,t^i 
&=\left(1+\frac{f(t)-g(t)}{g(t)}\right)(1-t) 
\\
&=1-t+\frac{f(t)-g(t)}{1-t^2}\frac{1}{(1-t^2)^{l-1}}\bigg(1+\sum_{j=3}^\infty c_jt^j\bigg)
\\
&=1-t+\bigg(\sum_{i=0}^\infty a_it^i\bigg)\bigg(1+(l-1)t^2+\sum_{j=3}^\infty d_jt^j\bigg)
 \end{align*}
with $d_j\ge0$ for $j\ge3$.  They yield $\mu^d_R=a_0+1$ and the expressions for 
$\mu^{d+i}_R-\mu^{d+i-1}_R$ when $i=1,2$.  In case $l\ge1$, and $a_i\ge0$ holds for $i\ge1$, 
we get the following relations, where  $\succcurlyeq$ denotes a coefficientwise inequality of 
formal power series
  \begin{align*}
\sum_{i\in\BZ}(\mu^{d+i}_R-\mu^{d+i-1}_R)\,t^i 
&\succcurlyeq 1-t+\bigg(\sum_{i=0}^\infty a_it^i\bigg)\big(1+(l-1)t^2\big)
  \\
&=a_0+(a_1-1)t+\sum_{i=2}^\infty\big(a_i+(l-1)a_{i-2}\big)t^{i}\,.
 \end{align*}
They imply the desired lower bounds for $\mu^{d+i}_R-\mu^{d+i-1}_R$ when $i\ge2$.

(2)  In view of \eqref{eq:product}, we can write $\Po Rk$ in the form 
  \begin{equation*}
\Po Rk=\frac{(1+t)^e(1+t^3)^s}{(1-t^2)^{l+1}}\bigg(1+\sum_{j=3}^\infty c_jt^j\bigg)
  \end{equation*}
with non-negative integers $c_j$.  Formulas \eqref{eq:difference} and \eqref{eq:rational} give  equalities
 \begin{align*}
\sum_{i\in\BZ}(\mu^{d+i}_R-\mu^{d+i-1}_R)\,t^i 
&=\frac{f(t)(1+t^3)^s}{(1-t^2)^2}\frac{1}{(1-t^2)^{l-2}}\bigg(1+\sum_{j=3}^\infty c_jt^j\bigg)
\\
&=\bigg(\sum_{j=0}^\infty b_jt^j\bigg)\bigg(1+(l-2)t^2+\sum_{j=3}^\infty d_jt^j\bigg)
  \end{align*}
with $d_j\ge0$ for $j\ge3$.  They yield $\mu^d=b_0$, and the expressions 
for $\mu^{d+i}_R-\mu^{d+i-1}_R$ when $i=1,2$.  When $l\ge2$, and $b_i\ge0$ holds 
for $i\ge1$, we also have
  \begin{align*}
\sum_{i\in\BZ}(\mu^{d+i}_R-\mu^{d+i-1}_R)\,t^i 
&\succcurlyeq \bigg(\sum_{i=0}^\infty b_it^i\bigg)\big(1+(l-2)t^2\big)
  \\
&=b_0+b_1t+\sum_{i=2}^\infty\big(b_i+(l-2)b_{i-2}\big)t^{i}\,.
 \end{align*}
The desired lower bounds for $\mu^{d+i}_R-\mu^{d+i-1}_R$ when $i\ge2$ follow from here.
  \end{proof}
 
The next lemma is the major step towards the proof of Theorem \ref{thm:growth}.
 
  \begin{lemma}
    \label{lem:growth}
If $(R,\fm,k)$ is a non-Gorenstein local ring with $e-d\le3$, then
  \[
\mu^{d+i}_R\ge\mu^{d+i-1}_R+1
  \quad\text{holds for}\quad i\ge1\,,
  \]
unless $i=2$ and $\widehat R$ is described by \eqref{eq:exception2} 
or \eqref{eq:exception3}, and then
  \[
\mu^{d+2}_R=\mu^{d+1}_R=2\,.
  \]
  \end{lemma}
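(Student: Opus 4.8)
The plan is to read off the asserted inequalities from the rational expressions for $\ba R$ and $\Po Rk$ in Theorem \ref{thm:series}, using Lemma \ref{lem:comparison} as the translation mechanism and supplying the needed coefficient bounds from Theorem \ref{thm:class}, Lemma \ref{lem:alt}, and Lemma \ref{lem:Gr}; the two exceptional families are then isolated with Lemma \ref{lem:class}(2) and Corollary \ref{cor:class}. First I would note that a non-Gorenstein $R$ with $c\le3$ has $c\in\{2,3\}$, hence lies in $\mathbf{S}$ when $c=2$ and in one of $\mathbf{T},\mathbf{B},\mathbf{G}(r),\mathbf{H}(p,q)$ when $c=3$; in the codepth $3$ cases $n\ge1$ (as in the proof of Lemma \ref{lem:class}(4)), $\mathbf{G}(r)$ satisfies $l\ge r+1$ by Lemma \ref{lem:Gr}, and $\mathbf{T},\mathbf{B},\mathbf{G}(r)$ fall under case (a) of Lemma \ref{lem:alt}, as recorded in the proofs of Lemma \ref{lem:Gr} and Theorem \ref{thm:class}.

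For the generic cases I would use part (1) of Lemma \ref{lem:comparison}. For each of $\mathbf{T}$, $\mathbf{B}$, $\mathbf{G}(r)$, $\mathbf{H}(0,0)$, $\mathbf{H}(p,q)$ with $p+q\ge1$ in case (a) of Lemma \ref{lem:alt}, and $\mathbf{S}$ with $l\ge2$, compute the Taylor coefficients $a_i$ of $(f(t)-g(t))/(1-t^2)$ directly from the relevant row of Theorem \ref{thm:series}; the outcome in every case is $a_0\ge0$, $a_1\ge2$, and $a_i\ge1$ for all $i\ge1$, the lower bounds using $l\ge2$, $n\ge1$ (with $n\ge2$ in $\mathbf{T}$), $l\ge r+1$ in $\mathbf{G}(r)$, and the case-(a) inequalities $n\ge p$, $l\ge q+2$ for $\mathbf{H}(p,q)$. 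Since $l\ge1$, Lemma \ref{lem:comparison}(1) then gives $\mu^{d+1}_R-\mu^{d}_R=a_1-1\ge1$ and $\mu^{d+i}_R-\mu^{d+i-1}_R\ge a_i+(l-1)a_{i-2}\ge a_i\ge1$ for $i\ge2$, so none of these rings is exceptional. (For $\mathbf{H}(0,0)$ one could alternatively cite Remark \ref{rem:answers}(1)(d), since it is Golod with $e-d=3$.)

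Two configurations remain. If $R$ is in $\mathbf{S}$ with $l=1$, then Lemma \ref{lem:class}(2) forces $c=2$, $h=1$, and $\widehat R\cong P/(wx,wy)$, which is \eqref{eq:exception2}; here $\ba R/t^d=(1+t-t^2)/(1-t-t^2)$, so the normalized Bass numbers $\nu_j=\mu^{d+j}_R$ satisfy $\nu_0=1$, $\nu_1=\nu_2=2$, and $\nu_j=\nu_{j-1}+\nu_{j-2}$ for $j\ge3$, whence $\nu_2-\nu_1=0$ while $\nu_j-\nu_{j-1}=\nu_{j-2}\ge1$ for $j\ge3$, which is exactly the stated behaviour. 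Otherwise $R$ is in $\mathbf{H}(p,q)$ under case (b) of Lemma \ref{lem:alt}; since $\mathbf{T},\mathbf{B},\mathbf{G}(r),\mathbf{H}(0,0)$ do not occur here, Corollary \ref{cor:class} presents $\widehat R$ as $P/(J+zR)$ with $z$ a $P/J$-regular element of $\fp^2$ and $\rank_k(J/\fp J)=l\ge2$. A direct computation from Theorem \ref{thm:series} gives $\ba R/t^d=((l-1)+t-t^2)/(1-t-(l-1)t^2)$, that is, the class-$\mathbf{S}$ series with parameter $l-1$; so for $l\ge3$ the $\mathbf{S}$-analysis above (with parameter $\ge2$) applies verbatim, and for $l=2$ the normalized sequence is again $1,2,2,4,6,\dots$. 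Finally, when $l=2$ one has $\operatorname{pd}_P(P/J)=\operatorname{pd}_P\widehat R-1=2$ (as $z$ is $P/J$-regular) and $P/J$ is not a complete intersection — otherwise $(J,z)$ would be generated by a regular sequence and $\widehat R$ would be Gorenstein — so $J$ is $2$-generated and non-complete-intersection, whence Lemma \ref{lem:class}(2) gives $J=(wx,wy)$ and $\widehat R\cong P/(wx,wy,z)$ as in \eqref{eq:exception3}.

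The step I expect to be the main obstacle is this last case analysis: verifying that case (b) of Lemma \ref{lem:alt} yields precisely the presentations \eqref{eq:exception2} and \eqref{eq:exception3} — which requires identifying $P/J$ via Lemma \ref{lem:class}(2) and ruling out that $J$ is a complete intersection — together with the observation that here the clean estimate of Lemma \ref{lem:comparison}(1) degenerates ($a_i=0$ for $i\ge3$, so the bound would fail for odd $i\ge5$), which is why one must pass through the class-$\mathbf{S}$ Bass series rather than apply the lemma to $R$ directly.
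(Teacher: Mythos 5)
Your proof is correct in substance and reaches the paper's conclusion, but by a somewhat different and more uniform path. Where the paper applies Lemma~\ref{lem:comparison}(2) to $\mathbf{T}$ (with $s=0$) and to $\mathbf{H}(p,q)$, $p+q\ge1$ (with $s=1$), reserving part~(1) for $\mathbf{B}$, $\mathbf{G}(r)$, $\mathbf{H}(0,0)$ and a direct recurrence for $\mathbf{S}$, you run part~(1) across every generic class and pre-sort $\mathbf{H}(p,q)$ into cases~(a)/(b) of Lemma~\ref{lem:alt} up front. Your coefficient bounds ($a_0\ge0$, $a_1\ge2$, $a_i\ge1$ for $i\ge1$) do hold in each generic class using Theorem~\ref{thm:class}, Lemma~\ref{lem:alt}(a), and Lemma~\ref{lem:Gr}, so that part of the argument goes through. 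The reduction of the Bass series in case~(b) to $t^d\bigl((l-1)+t-t^2\bigr)/\bigl(1-t-(l-1)t^2\bigr)$ is a clean observation that I checked (with $l=q+1=p$, $n=q=l-1$, both $f$ and $g$ for $\mathbf{H}(p,q)$ factor through $1+t$ to give this after a further common factor), and your identification of the exceptional presentations via Lemma~\ref{lem:class}(2) applied to $P/J$ --- after noting $P/J$ cannot be a complete intersection --- mirrors the paper's endgame, which routes through Corollary~\ref{cor:class}. The paper's advantage is that Lemma~\ref{lem:comparison}(2) handles both Lemma~\ref{lem:alt} cases for $\mathbf{H}(p,q)$ at once and surfaces the exception directly as $ln-p=l(n-1)=0$; yours buys uniformity in the use of part~(1) at the cost of the explicit case split.

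One step needs tightening. For the $\mathbf{H}$-rings in case~(b) with $l\ge3$ you assert that ``the $\mathbf{S}$-analysis above (with parameter $\ge2$) applies verbatim,'' but your $\mathbf{S}$-analysis for $l\ge2$ invokes Lemma~\ref{lem:comparison}(1), whose hypothesis requires $\Po Rk=(1+t)^{e-1}/g(t)$ with the \emph{same} $g(t)$ as in $\ba R=t^df(t)/g(t)$. For these $\mathbf{H}$-rings the polynomial from Theorem~\ref{thm:series} is $g(t)=1-t-lt^2+t^3+(l-1)t^4=(1+t)\bigl(1-2t+(2-l)t^2+(l-1)t^3\bigr)$, not $(1+t)\bigl(1-t-(l-1)t^2\bigr)$, so Lemma~\ref{lem:comparison}(1) cannot be applied with the reduced $g'(t)=1-t-(l-1)t^2$. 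The fix is what you already do for $l\le2$: read the recurrence directly off
\[
\bigl(1-t-(l-1)t^2\bigr)\sum_{j\ge0}\nu_jt^j=(l-1)+t-t^2,
\]
which gives $\nu_0=l-1$, $\nu_1-\nu_0=1$, $\nu_2-\nu_1=(l-1)^2-1=l(l-2)$, and $\nu_j-\nu_{j-1}=(l-1)\nu_{j-2}\ge1$ for $j\ge3$ (using that all Bass numbers of a non-Gorenstein ring are positive). This yields strict growth for $l\ge3$ and isolates $l=2$ as the sole exception, exactly as needed. With that small repair the argument is complete.
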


  \begin{proof}
Once again, there are several different cases to consider.

  \begin{Subsec}[Class $\mathbf{S}$]
Theorem \ref{thm:series} gives $(1-t-lt^2)\ba R=t^d(l+t-t^2)$, hence 
  \begin{alignat*}{2}
\mu^{d}_R&=l\,,
  \\
\mu^{d+1}_R-\mu^{d}_R&=1\,,
  \\
\mu^{d+2}_R-\mu^{d+1}_R&=l^2-1\,,
  \\
\mu^{d+i}_R-\mu^{d+i-1}_R&=l\mu^{d+i-2}_R\ge 2
&&\quad\text{for}\quad i\ge3\,.
  \end{alignat*}

We get $\mu^{d+i}_R\ge\mu^{d+i-1}_R+1$ for all $i\ge1$, except when $i=1$ and $l=1$, and then 
$\mu^{d+2}_R=\mu^{d+1}_R=2$.  Furthermore, $l=1$ implies \eqref{eq:exception2} by 
Lemma \ref{lem:class}(2).
  \end{Subsec}

For the rest of the proof we assume $c=3$ and let $f(t)$ and $g(t)$ be 
 the polynomials from Theorem \ref{thm:series}, satisfying $\Po Rk=(1+t)^{e-1}/f(t)$ 
and $\ba S=t^df(t)/g(t)$. 

  \begin{Subsec}[Class $\mathbf{T}$]
The value of $f(t)$ from Theorem \ref{thm:series} provides the first equality below:
  \begin{align*}
\frac{f(t)}{(1-t^2)^2}
&=\frac{n+lt-2t^2-t^3+t^4}{(1-t^2)^2}
\\
&=(n-2t^2+t^4)\sum_{j=0}^\infty(j+1)t^{2j}+(lt-t^3)\sum_{j=0}^\infty(j+1)t^{2j}
  \\
&=n+\sum_{j=0}^\infty\big((l-1)j+l\big)t^{2j+1}+\sum_{j=1}^\infty(n-1)(j+1)t^{2j}
  \end{align*} 
  
Theorem \ref{thm:class} gives $l,n\ge 2$, so
Lemma \ref{lem:comparison}(2) applies with $s=0$ and yields
  \begin{alignat*}{2}
\mu^{d+1}_R-\mu^{d}_R&=l-1\ge1\,,
   \\
\mu^{d+2j}_R-\mu^{d+2j-1}_R&\ge(n-1)(j+1)\ge2  &\quad\text{for}\quad j&\ge1\,,
  \\
\mu^{d+2j+1}_R-\mu^{d+2j}_R&\ge(l-1)j+l\ge 2 &\quad\text{for}\quad j&\ge1\,.
  \end{alignat*}
  \end{Subsec}

  \begin{Subsec}[Classes $\mathbf{B}$, $\mathbf{G}(r)$, and $\mathbf{H}(0,0)$]
Theorem \ref{thm:class} provides uniform expressions, 
  \begin{align*}
f(t)&=n+(l-r)t-(r-1)t^2+(p-1)t^3+qt^4 \quad\text{and}
\\
g(t)&=1-t-lt^2-(n-p)t^3+qt^4\,,
 \end{align*}
for the polynomials that appear in Theorem \ref{thm:series}.  Using them, we obtain
  \begin{align*}
\frac{f(t)-g(t)}{1-t^2}
&=\frac{(n-1)(1+t^3)+(l+1-r)(t+t^2)}{1-t^2}
\\
&=(n-1)+(l+1-r)t+(l+n-r)\bigg(\sum_{j=2}^\infty t^{j}\bigg)\,.
 \end{align*}
 
Lemma \ref{lem:Gr} gives $l+n-r\ge l+1-r\ge2$, so the series above has non-negative
coefficients.  Thus, Lemma \ref{lem:comparison}(1) applies and yields 
  \begin{alignat*}{2}
\mu^{d+1}_R-\mu^{d}_R&=l-r\ge1\,,
  \\
\mu^{d+i}_R-\mu^{d+i-1}_R&\ge l+n-r\ge 2
&&\quad\text{for}\quad i\ge2\,.
  \end{alignat*}
  \end{Subsec}

 \begin{Subsec}[Class $\mathbf{H}(p,q)$ with $p+q\ge1$]  Theorem \ref {thm:series} gives
\begin{align*}
\frac{f(t)(1+t^3)}{(1-t^2)^2}
&=\frac{n+(l-q)t-pt^2-t^3+t^4}{(1-t^2)^2}(1+t^3)
  \\
&=n+(l-q)t+(2n-p)t^2+\sum_{i=3}^\infty b_it^{i}\,,
 \end{align*}
where for $i\ge3$ the numbers $b_i$ are defined by the formulas
  \begin{alignat*}{2}
b_{2j+1}&=(l-q+n-p)j+l+p-q-2 &\quad\text{for}\quad j&\ge1\,,
  \\
b_{2j}&=(l+n-p-q)j-l+n+q+1 &\quad\text{for}\quad j&\ge2\,.
 \end{alignat*}

By Theorem \ref{thm:class}, we have $l-q\ge1$, $n-p\ge-1$, and $n\ge 1$, hence
  \begin{alignat*}{2}
2n-p&=n+(n-p)\ge n-1\ge0 \,,
  \\
b_{2j+1}\ge b_3&=n-2+2(l-q)\ge n\ge 1 &\quad\text{for}\quad j&\ge1\,,
  \\
b_{2j}\ge b_4&=n+2(n-p)+(l-q)\ge n\ge1 &\quad\text{for}\quad j&\ge2\,.
   \end{alignat*}
Thus, Lemma \ref{lem:comparison}(2) applies with $s=1$.  With the preceding 
inequalities, it gives
  \begin{align*}
\mu^{d+1}_R-\mu^{d}_R&=l-q\ge1\,,
  \\
\mu^{d+2}_R-\mu^{d+1}_R&=2n-p +(l-2)n=ln-p\ge l(n-1)\ge0\,,
  \\
\mu^{d+i}_R-\mu^{d+i-1}_R&\ge b_i\ge 1 \quad\text{for}\quad i\ge3\,.
   \end{align*}
We conclude that $\mu^{d+i}_R\ge\mu^{d+i-1}_R+1$ holds for all $i\ge1$, except 
when $i=2$ and $ln-p=l(n-1)=0$.  To finish the proof, we unravel this special case.
 
The last two equalities force $n=1$ and $l=p$.  Now Theorem \ref{thm:class} gives the 
inequalities in the string $2=n+1\ge p=l\ge2$, whence $l=p=n+1=2$.  
Thus, we have shown that condition (iii) in Corollary \ref{cor:class} holds with $n=p-1=1$.
{From} condition (ii) in that corollary we get $q=n=1$, so the formulas above yield
  \[
\mu^{d+2}_R=\mu^{d+1}_R=\mu^{d}_R+l-q=n+l-q=2\,.
  \]
On the other hand condition (iv) gives an isomorphism $\widehat R\cong P/(J+zR)$, 
where $(P,\fp,k)$ is a regular local ring, $J$ is an ideal of $P$ contained in $\fp^2$ 
and minimally generated by $2$ elements, and $z$ is an element of $\fp^2$ that
is regular on $P/J$.  Since $R$ is not complete intersection, neither is $P/J$, which
means that $J=(wx,wy)$ for some non-zero element $w$ in $\fp$ and 
$P$-regular sequence $x,y$.  Thus, \eqref{eq:exception3} holds.
 \qedhere
  \end{Subsec}
   \end{proof}

  \stepcounter{theorem}

    \begin{proof}[Proof of Theorem \emph{\ref{thm:growth}}]
We may assume that $R$ is complete.  A construction of Foxby, see \cite[3.10]{Fo2},
then yields a finite $R$-module $N$, such that
  \begin{equation}
    \label{eq:foxby}
\mu^{d+i}_R=\beta^{R}_{i}(N)\quad\text{for all}\quad i\ge \dim R-d\,.
  \end{equation}

By \cite[1.4 and 1.6]{Av:msri}, when $c\le3$ the Betti sequence of every finite 
$R$-module either has strongly exponential growth or is eventually constant.  
Since $R$ is not Gorenstein, Lemma \ref{lem:growth} rules out the second 
case for the module $N$ in \eqref{eq:foxby}.  Thus, $\beta^{R}_{i}(N)\ge\alpha^i$ 
holds for some real number $\alpha>1$ and all $i\gg0$.   

The series $\Po RN$ converges in a circle of radius $\rho>0$, see 
\cite[4.1.5]{Av:barca}.  As $\rho$ is equal to $\limsup_i\{1/\sqrt[i]{\beta^i_R(N)}\,\}$ 
we get $0<\rho\le1/\alpha<1$.  Fix a real number $\beta$ satisfying
  \begin{equation}
    \label{eq:beta}
1/\rho>\beta>1\,.
  \end{equation}
Sun \cite[1.2(c)]{Su} proved that there is an integer $f$, such that 
  \begin{equation}
    \label{eq:sun}
\beta_{i}^R(N)\ge\beta\beta_{i-1}^R(N)
\quad\text{holds for all}\quad i\ge f+1\,.
  \end{equation}
Set $j=\max\{3,\dim R-d,f\}$ and define real numbers $\gamma'$ and $\gamma''$ by the formulas
  \begin{align*}
\gamma'&=\min\left\{\beta\,,\mu^{d+1}_R/\mu^{d}_R\,,\min\{\mu^{d+i}_R/\mu^{d+i-1}_R\}_{3\les i\les j}\right\}\,,
  \\
\gamma''&=\min\left\{\gamma'\,,\mu^{d+2}_R/\mu^{d+1}_R\right\}\,.
  \end{align*}

In view of \eqref{eq:foxby} and \eqref{eq:sun}, the following inequalities then hold:
  \begin{equation*}
\mu^{d+i}_R\ge
  \begin{cases}
\gamma'\mu^{d+i-1} &\text{for $i=1$ and } i\ge3\,,\\
\gamma''\mu^{d+i-1}&\text{for }i\ge 1\,.
  \end{cases}
  \end{equation*}
{From} \eqref{eq:beta} and Lemma \ref{lem:growth} we see that $\gamma''>1$ 
holds unless $\widehat R$ satisfies \eqref{eq:exception2} or \eqref{eq:exception3}, else 
$\gamma'>1$  holds and $\mu^{d+2}_R=\mu^{d+1}_R=2$.  This is the desired result.
   \end{proof}
  
\appendix

\section{Graded algebras}
  \label{S:Graded algebras}

Here $k$ denotes a field and $B$ a graded $k$-algebra that is 
graded-commutative, has $B_0=k$ and $B_i=0$ for $i<0$, and 
$\rank_kB$ is finite; set  $B_+=B_{\ges1}$.  

In addition, $M$ and $N$ denote finitely generated graded $B$-modules; we set 
  \[
H_M(t)=\sum_{i\in\BZ}\rank_kM_i\,t^i\,.
  \]

We treat $B$ as a DG algebra and $M$, $N$ as DG $B$-modules, all 
with zero differentials.   As a consequence, $\Tor {i}BMN$ and $\ext {i}BMN$ 
are formed as in \ref{DGD}.  The $k$-spaces $\Tor iBMN$ and $\ext iBMN$ 
are finite for each $i\in\BZ$ and zero for $i\ll0$, so \emph{Poincar\'e series} 
$\Po BM$ and \emph{Bass series} $\baa BN$ are defined; see \eqref{eq:defP} and \eqref{eq:defI}.

Here we assemble a collection of such series, used in the body of the paper.  Their
computations rely on analogs of results concerning finite modules over local rings.  

\begin{subsec}
  \label{maximalP}
For the graded $B$-modules $\shift^sN$ and $N^*=\Hom kNk$, see \ref{DG}, one has
  \begin{alignat}{3}
   \label{eq:shift}
\Po B{\shift^sN}&=t^s\cdot{\Po B{N}}
  &\quad&\text{and}\quad
&\baa B{\shift^sN}(t)&=t^{-s}\cdot{\baa B{N}}\,.
   \\
   \label{eq:dual}
{\Po B{N^*}}&=\baa B{N}(t)\
  &\quad&\text{and}\quad
&\baa B{N^*}(t)&=\Po B{N}\,.
  \end{alignat}

An exact sequence $0\to N\to G\to M\to0$ with $G$ free and $N\subseteq B_+G$
yields
  \begin{equation}
   \label{eq:maximalP}
\Po BN=t^{-1}\cdot(\Po B{M}-H_{M/B_+M}(t))\,.
   \end{equation}
   
If $B=C\otimes_kD$ and $M=T\otimes_kU$, where $T$ is a graded $C$-module and
$U$ a graded $D$-module, then the K\"unneth Formula gives
  \begin{equation}
   \label{eq:kunneth}
\Po BM=\Po CT\cdot\Po DU
  \quad\text{and}\quad
\baa B{M}=\baa C{T}\cdot\baa D{U}\,.
   \end{equation}
  \end{subsec}

The formulas above suffice to compute $\Po Bk$ and/or $\ba B$ in
some simple cases.

\begin{example}
  \label{ex:trivial}
If $B=k\ltimes W$ for some graded $k$-vector space $W\ne0$, then:
   \begin{align}
  \label{eq:nullP}
{\Po Bk}
&=\frac{1}{1-t\cdot H_W(t)}\,.
  \\
   \label{eq:nullI}
\frac{\ba B}{\Po Bk}
&=H_W(t^{-1})-t\,.
  \end{align}
  
Indeed, $B_+^2=0$ implies $\Po B{B_+}=H_W(t)\cdot\Po Bk$, so \eqref{eq:nullP} 
follows from \eqref{eq:maximalP}.  As $B_+(B^*)=(B^*)_{\ges0}$, any lifting to 
$B^*$ of some basis of the $k$-space $(B_+)^*$ minimally generates $B^*$ 
over $B$.  Thus, there is an exact sequence of graded $B$-modules
  \[
0\to U\to B\otimes_k(B_+)^*\to B^*\to 0
  \]
with $U\subseteq B_+\otimes_k(B_{+})^*$, hence
$B_+U=0$.  {From} this and \eqref{eq:dual} we obtain
  \[
{\ba B}={\Po B{B^*}}=H_U(t)\cdot t\cdot\Po Bk+H_W(t^{-1})\,,
  \]
because $H_{(B_+)^*}(t)=H_W(t^{-1})$.  Since $H_{B}(t)=H_W(t)+1$, the 
sequence also gives
  \[
H_U(t)
=(H_W(t)+1)\cdot H_W(t^{-1})-(H_W(t^{-1})+1)
=H_W(t)\cdot H_W(t^{-1})-1\,,
  \]
because $H_{B^*}(t)=H_B(t^{-1})$.  {From} the last two formulas
and \eqref{eq:nullP}, we get
  \begin{align*}
\frac{\ba B}{\Po Bk}
&=t\cdot\big(H_W(t)\cdot H_W(t^{-1})-1\big)+H_W(t^{-1})\cdot\big(1-t\cdot H_W(t)\big)
=H_W(t^{-1})-t\,.
   \end{align*}
\end{example}

\begin{example}
  \label{ex:exterior}
If $B=\bigwedge_kV$, where $V_i=0$ for all even $i$, then there is an equality
  \begin{equation}
   \label{eq:exteriorP}
\Po B{k}=\prod_{i\in\BZ}\frac 1{(1-t^{i+1})^{\rank_kV_i}}\,.
  \end{equation}

Indeed, set $c=\rank_kV$.  When  $c=1$ the isomorphism 
$\bigwedge_k\shift^i k\cong k\ltimes\shift^i k$ and \eqref{eq:nullP} give the 
desired expression.  For $c\ge2$ it is obtained by induction, using the isomorphism 
$\bigwedge_k(V'\oplus V'')\cong \bigwedge_kV'\otimes_k\bigwedge_kV''$
and  \eqref{eq:kunneth}.
 \end{example}

\begin{example}
  \label{ex:dual}
When $B$ has Poincar\'e duality in degree $s$ there is an equality 
  \begin{equation}
   \label{eq:exteriorI}
\ba B=t^{-s}\,.
  \end{equation}

Indeed, the condition on $B$ means that the $B_i\to\Hom k{B_{s-i}}{B_s}$, induced 
by the products $B_i\times B_{s-i}\to B_s$, are bijective for all $i\in\BZ$.  This implies 
an isomorphism $B^*\cong\shift^{-s}B$ of graded $B$-modules, so \eqref{eq:shift} and
\eqref{eq:dual} give
  \[
\ba B=\Po B{B^*}=\Po B{\shift^{-s}B}=t^{-s}\cdot\Po BB=t^{-s}\,.
  \]
 \end{example}

The next result is an analog of a theorem of Gulliksen; see \cite[Thm.\,2]{Gu:gol}.  
The original proof, or the one for \cite[Cor.\,2]{He}, carries over essentially without changes.

\begin{subsec}
  \label{trivial}
If $B=C\ltimes W$ for some graded $k$-algebra $C$ and graded $C$-module $W$, then
  \begin{equation}
   \label{eq:trivialP}
\frac1{\Po Bk}=\frac1{\Po Ck}-t\cdot\frac{\Po CW}{\Po Ck}\,.
   \end{equation}
  \end{subsec}

\begin{example}
If $B=C\ltimes \shift^s(C^*)$ with $C=k\ltimes W$, then the following hold:
   \begin{alignat}{2}
  \label{eq:null2P}
{\Po Bk}
&=\frac1{1-t\cdot H_W(t)-t^{s+1}\cdot H_W(t^{-1})+t^{s+2}}\,.
  \\
  \label{eq:null2I}
\frac{\ba B}{\Po Bk}
&=\frac{1-t\cdot H_W(t)-t^{s+1}\cdot H_W(t^{-1})+t^{s+2}}{t^{s}}\,.
  \end{alignat}

Indeed, the isomorphism of graded 
$B$-modules $\shift^s(C^*)\cong(\shift^{-s}C)^*$ and \eqref{eq:dual} 
give $\Po C{\shift^s(C^*)}=\baa C{\shift^{-s}C}(t)=t^s\cdot \ba C$. Now 
\eqref{eq:trivialP}, \eqref{eq:nullP}, and \eqref{eq:nullI} yield
  \[
\frac{1}{\Po Bk}
=\frac{1}{\Po Ck}-t\cdot t^s\cdot\frac{\ba C}{\Po Ck}
=1-t\cdot H_W(t)-t^{s+1}\cdot H_W(t^{-1})+t^{s+2}\,.
  \]
Since $B$ has Poincar\'e duality in degree $s$, 
\eqref{eq:null2P} and \eqref{eq:exteriorI} imply \eqref{eq:null2I}.
  \end{example}

The following analog of a result of Lescot, see \cite[1.8(2)]{Ls}, can be proved
along the lines of the original argument, but subtle changes are
needed.  Instead of going into those details, we refer to  \cite{AI} for
a direct proof covering both cases. 

\begin{subsec}
  \label{prop:lescot}
If $B_+\ne0$, then the following equality holds: 
  \begin{equation}
   \label{eq:syzygyC}
\frac{\ba B}{\Po Bk}=\frac{\baa B{B_+}(t)}{\Po Bk}-t\,.
  \end{equation}
\end{subsec}

In the last two examples we adapt the arguments for \cite[3.2(1) and 1.9]{Ls}.

\begin{example}
  \label{prop:trivialI}
If $B=C\ltimes W$ for some graded $k$-algebra $C$ with $C_+\ne0$ and 
graded $C$-module $W$ with $C_+W=0$, then the following equality holds:
   \begin{equation}
  \label{eq:trivialI}
\frac{\ba B}{\Po Bk}
=\frac{\ba C}{\Po Ck}+H_W(t^{-1})\,.
  \end{equation}

Indeed, $C$ is an algebra retract of $B$.  The  proof of \cite[Thm.\,1]{He}
transfers \emph{verbatim} and gives $\Po BN/\Po Bk=\Po CN/\Po Ck$
for each graded $C$-module $N$, wiewed as a $B$-module via the natural 
homomorphism $B\to C$.   By \eqref{eq:dual}, this implies
that $\baa BN(t)/\Po Bk=\baa CN(t)/\Po Ck$ holds as well.  Since $B_+=C_+\oplus W$
as graded $B$-modules, using the preceding equality and \eqref{eq:syzygyC} 
(twice) we obtain
   \begin{align*}
\frac{\ba B}{\Po Bk}
=\frac{\baa B{C_+}(t)}{\Po Bk}+\frac{\baa B{W}(t)}{\Po Bk}-t
=\frac{\baa C{C_+}(t)}{\Po Ck}+H_W(t^{-1})-t
=\frac{\ba C}{\Po Ck}+H_W(t^{-1})\,.
  \end{align*}
  \end{example}

\begin{example}
  \label{cor:lescot}
If $B=E/E_{\ges s}$, where $E$ is a graded $k$-algebra that has 
Poincar\'e duality in degree $s$, then the following equality holds:
  \begin{equation}
   \label{eq:truncatedI}
\frac{\ba B}{\Po Bk}=t^{-s-1}\cdot\bigg(1-\frac{1}{\Po Bk}\bigg)-t\,.
  \end{equation}

Indeed, set $(-)'=\Hom k-{\shift^sk}$.  Applying the functor $(-)'$ to 
the exact sequence $0\to B_+\to B\to k\to 0$ we get $(B_+)'\cong 
B'/B'_{\ges s}$ as graded $B$-modules.  Since $E\cong E'$ 
as graded $E$-modules, $(-)'$ applied to $0\to \shift^sk\to E\to B\to 0$ 
gives $B'\cong E_+$, hence $B'/B'_{\ges s}\cong E_+/E_{\ges s}
\cong B_+$, and thus $B_+\cong(B_+)'\cong(\shift^{-s}B_+)^*$.

Now from formulas \eqref{eq:syzygyC}, \eqref{eq:dual}, and 
\eqref{eq:maximalP} we obtain
  \[
\frac{\ba B}{\Po Bk}+t
=\frac{\baa B{(\shift^{-s}B_+)^*}}{\Po Bk}
=\frac{\Po B{\shift^{-s}B_+}}{\Po Bk}
=t^{-s}\cdot t^{-1}\cdot\frac{\Po Bk-1}{\Po Bk}\,.
  \]
  \end{example}

All the labeled formulas in this appendix are used in computations in Section 
\ref{S:Series}.

  \section*{Acknowledgements}
I thank Lars Winther Christensen, Oana Veliche, and an anonymous 
referee for a number of corrections and remarks that have significantly 
cleaned up the text.

\end{document}